\theoremstyle{plain}
\newtheorem{theorem}{Theorem}
\newtheorem{proposition}{Proposition}
\theoremstyle{definition}
\newtheorem{definition}{Definition}
\theoremstyle{remark}
\newcommand{\Z}{\mathbb{Z}}
\newcommand{\C}{\mathbb{C}}
\newcommand{\R}{\mathbb{R}}
\newcommand{\vac}{\mathbf {1}_V}
\numberwithin{equation}{section} 
\newcommand{\N}{\ensuremath{\mathbb {N}}}
\newcommand{\g}{\ensuremath{\Gamma}}
\newcommand{\ps}{{\raise 1pt\hbox{\tiny (}}}
\newcommand{\pss}{{\raise 1pt\hbox{\tiny [}}}
\newcommand{\pdd}{{\raise 1pt\hbox{\tiny ]}}}
\newcommand{\pd}{{\raise 1pt\hbox{\tiny )}}}
\newcommand{\bs}{{\raise 1pt\hbox{\tiny [}}}
\newcommand{\bd}{{\raise 1pt\hbox{\tiny ]}}}
\def\cross{\mathinner{\mathrel{\raise0.8pt\hbox{$\scriptstyle>$}}
                 \joinrel\mathrel\triangleleft}}
\def\K{\mathcal{K}}
\newcommand{\be}{\begin{equation}}
\newcommand{\ee}{\end{equation}}
\newcommand{\nn}{\nonumber \\}
 \newcommand{\res}{\mbox{\rm Res}}
\newcommand{\wt}{\mbox{\rm wt}\ }
\newcommand{\one}{\mathbf{1}_V}
\newcommand{\nc}{\newcommand}
\nc{\cali}{\mathcal}
\nc{\on}{\operatorname}
\nc{\Wick}{{\mb :}}
\nc{\ddz}{\frac{\partial}{\partial z}}
\nc{\ch}{\mbox{ch}}
\nc{\Oo}{{\cali O}}
\nc{\cond}{|\,}
\nc{\bib}{\bibitem}
\nc{\pone}{\Pro^1}
\nc{\pa}{\partial}
\nc{\arr}{\rightarrow}
\nc{\larr}{\longrightarrow}
\nc{\ket}{\rangle}
\nc{\bra}{\langle}
\nc{\gam}{\bar{\gamma}}
\nc{\q}{\widetilde{Q}}
\nc{\ep}{\epsilon}
\nc{\su}{\widehat{{\mf s}{\mf l}}_2}
\nc{\sw}{{\mf s}{\mf l}}
\nc{\h}{{\mf h}}
\nc{\n}{{\mf n}}
\nc{\ab}{\mf{a}}
\nc{\is}{{\mb i}}
\nc{\js}{{\mb j}}
\nc{\bi}{\bibitem}
\nc{\He}{{\cali H}}
\nc{\inv}{^{-1}}
\nc{\ol}{\overline}
\nc{\wh}{\widehat}
\nc{\dst}{\displaystyle}
\nc{\delt}{\partial_t}
\nc{\ddt}{\frac{\partial}{\partial t}}
\nc{\delx}{\partial_x}
\nc{\mb}{\mathbf}
\nc{\mf}{\mathfrak}
\nc{\mbb}{\mathbb}
\nc{\Ctt}{\C((t))}
\nc{\Ct}{\C[t,t\inv]}
\nc{\ghat}{\wh{\g}}
\nc{\un}{\underline}
\nc{\mc}{\mathcal}
\nc{\BB}{{\mc B}}
\nc{\bb}{{\mf b}}
\nc{\kk}{{\mf k}}
\nc{\frob}{\times}
\nc{\sm}{\setminus}
\nc{\Pp}{{\mathbb P}^1}
\nc{\Aa}{{\mc A}}
\nc{\AutO}{\on{Aut}\Oo}
\nc{\AUTO}{\un{\on{Aut}}\Oo}
\nc{\AUTK}{\un{\on{Aut}}\K}
\nc{\Heout}{\He_{\out}}
\nc{\Hetil}{{\widetilde\He}}
\nc{\wb}{\overline}
\nc{\Res}{\on{Res}}
\nc{\pitil}{\Pi}
\nc{\Ctil}{\wt{C}}
\nc{\auto}{\on{Aut} \Oo}
\nc{\phitil}{\wt{\phi}}
\nc{\gz}{\g_{\vec z}}
\nc{\tensorM}{\bigotimes_{i=1}^N{\mathbb M}_i}
\nc{\tensorW}{\bigotimes_{i=1}^N W_{\nu_i,k}}
\nc{\out}{\on{out}}
\nc{\m}{{\mathfrak m}}
\nc{\gx}{\g^0_{\vec x}}
\nc{\hx}{\He^0_{\vec x}}
\nc{\tensorpi}{\pi_{\nu_1,\ldots,\nu_N}^\kappa}
\nc{\Phizw}{\Phi_{\vec w}({\vec z})}
\nc{\Pro}{{\mathbb P}}
\nc{\De}{\Delta}
\nc{\us}{\underset}
\nc{\Ll}{\mc L}
\nc{\dR}{\on{dR}}
\nc{\T}{{\mc T}}
\nc{\Xn}{\overset{\circ}X{}^n} \nc{\Dn}{\overset{\circ}D{}^n}
\nc{\Dxn}{\overset{\circ}D{}^n_x} \nc{\varphitil}{\wt{\varphi}}
\nc{\lf}{{\mf l}}
\nc{\GL}{{}^L G}
\nc{\Vir}{\on{Vir}}
\begin{document}
\title[Bigraded differential algebra for vertex algebra complexes]  
{Bigraded differential algebra for vertex algebra complexes} 
\author{A. Zuevsky} 
\address{Institute of Mathematics \\ Czech Academy of Sciences\\ Praha}

\email{zuevsky@yahoo.com}
\begin{abstract}
For an infinite chain bicomplex 
we show that the orthogonality and grading conditions 
provide it 
 with the structure of a bigraded differential algebra with respect to a natural multiplication
of several elements bicomplex spaces. 
Corresponding bigraded algebra commutation relations generate 
a sequence of non-vanishing cohomology invariants associated to vertex algebras. 
In particular, we apply this result to the bicomplex of 
 grading-restricted vertex algebra cohomology endowed with a multiplication we introduce. 
We provide examples associated to various choices of vertex algebra bicomplex subspaces. 
The generators and commutation relations of the bigraded differential algebra form a continual Lie algebra 
with the root space provided by a grading-restricted vertex algebra.   
AMS Classification: 53C12, 57R20, 17B69 
\end{abstract}

\keywords{Cohomological invariants; orthogonality condition for chain complexes; 
 bidifferential algebras; continual Lie algebras} 
\vskip12pt  

\maketitle
\section{Conflict of Interest Statement}
The author states that: 

1.) The paper does not contain any potential conflicts of interests. 

\section{Data availability statement}
The author confirms  that: 

\medskip 
1.) The paper does not use any datasets. No dataset were generated during and/or analysed 
during the current study. 

2.) The paper includes all data generated or analysed during this study. 

3.) Data sharing not applicable to this article as no datasets were generated or analysed during the current study.

4.) The data of the paper can be shared openly.  

\section{Introduction} 
\label{valued}
The cohomology theory for vertex operator algebras is an important and attractive theme for studies.
In \cite{Huang} the cohomology theory for a grading-restricted vertex algebra \cite{K}  
was introduced. 
The definition of bicomplex spaces and coboundary operators 
uses an interpretation of vertex algebras in terms of rational functions constructed from    
 matrix elements \cite{H2, FQ} for a grading-restricted vertex algebra.  
The notion of composability with a number of vertex operators for bicomplex  
space elements is essentially  
involved in the formulation. 
The cohomology of such complexes defines a cohomology of a grading-restricted vertex 
algebras in the standard way. 
 It is an important problem to study possible cohomological classes for vertex algebras.

For differential forms considered on smooth manifolds,  
the Frobenius theorem for a distribution leads to the orthogonality condition.  
In this paper we show that the orthogonality condition with respect to a commutator, 
and double grading conditions assumed for 
elements of the bicomplex spaces associated to a grading-restricted vertex algebra 
endow the bicomplex spaces with 
the structure of 
a bigraded differential algebra with respect the commutator of bicomplex mappings. 
 The orthogonality condition for elements of bicomplex spaces 
is motivated by geometrical construction of cohomological invariants 
for foliated manifolds \cite{Ghys}. 
 Originally, the orthogonality comes from consideration of differential forms. 
 Here we implement similar idea in the case of rational functions associated to 
grading-restricted vertex algebras. 
Using the above mentioned conditions we then find further explicit examples of continual Lie algebras 
\cite{saver} associated to vertex algebras. 
 We derive also cohomological classes for the bicomplex for a grading-restricted vertex algebra.  
 Such cohomological classes are non-vanishing and independent of the choice of the bicomplex space elements.    
Our main motivation was to show that the bicomplex construction
 originating from algebraic properties of vertex algebras, and formulated in terms of rational functions 
with specific properties possesses deeper cohomological structure similar to that of differential forms 
in algebraic topology. 

As for possible applications of the material presented in this paper, we would like to mention 
computations of higher cohomologies for grading-restricted vertex algebras \cite{Hu3}, 
search for more complicated 
cohomological invariants, and applications to differential geometry. 
In particular, since vertex algebras is a useful computational tool, 
it would be interesting to study possible relations to cohomology of manifolds. 
One can show that such cohomological invariants possess analytical (with respect 
to the notion of composability) as well as geometrical meaning. 
In addition to the natural orthogonality condition, ona can consider variations of multiplications 
defined for bicomplex spaces, and, therefore, more advanced examples of graded differential 
algebras. 
In differential geometry there exist various approaches to the construction of cohomological classes
(cf., in particular, \cite{Losik}). We hope to use these techniques to derive counterparts in 
the cohomology theory of vertex algebras.
The results proven in this paper are also useful in computations of cohomology of foliations \cite{BG, BGG}. 

\section{$\overline{W}$-valued rational functions} 
Recall the notion of a grading restricted vertex algebras described in Appendix \ref{grading}.  
In this Section we recall the notion of a special rational functions 
which form spaces for the chain-cochain construction.  
Let $V$ be a grading-restricted 
vertex algebra, and $W$ a grading-restricted generalized $V$-module.   
 One defines the configuration spaces    
\[
F_{n}\C=\{(z_1, \dots, z_n)\in \C^n \;|\; z_i \ne z_j, i\ne j\}, 
\] 
for $n\in \Z_+$.
By $\overline{W}$ we denote the algebraic completion of $W$, 
\[
\overline{W}=\prod_{n\in \mathbb C}W_{(n)}=(W')^*.
\]
 A $\overline{W}$-valued rational function in $(z_1, \dots, z_n)$  
with the only possible poles at 
$z_i=z_j$, $i\ne j$, 
is a map 
\begin{eqnarray*}
 f: F_n \C &\to& \overline{W},   
\\
 (z_1, \dots, z_n) &\mapsto& f(z_1, \dots, z_n),   
\end{eqnarray*} 
such that for any $w'\in W'$,  
\begin{equation}
\label{pochta}
\langle w', f(z_1, \dots, z_n) \rangle, 
\end{equation} 
is a rational function in $(z_1, \dots, z_n)$  
with the only possible poles at 
$z_i =z_j$, $i\ne j$. 
Such map one calls $\overline{W}$-valued rational function in
$(z_1, \dots, z_n)$ with possible other poles. 
Denote the space of all $\overline{W}$-valued rational functions in
$(z_1, \dots, z_n)$ by $\overline{W}_{z_1, \dots, z_n}$. 
One defines a left action of $S_{n}$ on $\overline{W}_{z_1, \dots, z_n}$
by
\[
(\sigma(f))(z_1, \dots, z_n)=f(z_{\sigma(1)}, \dots, z_{\sigma(n)}),
\]
for $f\in \overline{W}_{z_1, \dots, z_n}$. 
\subsection{The requirements for $\overline{W}$-valued functions}
In \cite{Huang} the following properties for linear maps 
\[
\Phi: V^{\otimes n}\to  
\overline{W}_{z_1, \dots, z_n}, 
\]
are required in order to be able to construct the bicomplexes of $\overline{W}_{z_1, \dots, z_n}$-valued functions. 
Such a linear map is called to have
the $L(0)$-conjugation property if for $(v_1, \dots, v_n) \in V$, 
$w'\in W'$, $(z_1, \dots, z_n)\in F_n\C$ and $z\in \C^{\times}$, so that 
$(zz_1, \dots, zz_n)\in F_n\C$,
\begin{eqnarray}
\label{loconj}
\lefteqn{\langle w', z^{L_W(0)}
(\Phi(v_1 \otimes \cdots \otimes v_n))(z_1, \dots, z_n)\rangle}\nn
&&=\langle w', 
(\Phi(z^{L(0)}v_1 \otimes \cdots \otimes  z^{L(0)}v_n))(zz_1, 
\dots, zz_n)\rangle.
\end{eqnarray}

For $n\in \Z_+$, a linear map $\Phi$ 
 is called to have
the $L(-1)$-derivative property if (i) 
\begin{eqnarray}
\label{lder1}
\lefteqn{\frac{\partial}{\partial z_i}\langle w', 
(\Phi(v_1 \otimes \cdots  \otimes v_n))(z_1, 
\dots, z_n)\rangle}\nn
&&=\langle w', 
(\Phi(v_1 \otimes \cdots \otimes v_{i-1}\otimes L_V(-1)v_i
\otimes v_{i+1}\otimes \cdots \otimes v_n))(z_1, \dots, z_n)\rangle, 
\end{eqnarray}
for $i=1, \dots, n$, $(v_1, \dots, v_n)\in V$, and
$w'\in W'$ and (ii) 
\begin{eqnarray}
\label{lder2}
\lefteqn{\left(\frac{\partial}{\partial z_1}+\cdots +\frac{\partial}{\partial z_n}\right)
\langle w', 
(\Phi(v_1 \otimes \cdots \otimes v_n))(z_1, \dots, z_n)\rangle}
\nn
&&\quad=\langle w', L_W(-1)
(\Phi(v_1 \otimes \cdots  \otimes v_n))(z_1,
\dots, z_n)\rangle, 
\end{eqnarray}
and $(v_1, \dots, v_n)\in V$, 
$w'\in W'$.

One defines an action of $S_n$ on the space of linear maps  $\Phi$ by 
\[ 
(\sigma(\Phi))(v_1 \otimes \cdots\otimes v_n)
=\sigma(\Phi(v_{\sigma(1)}\otimes \cdots\otimes v_{\sigma(n)})),
\]  
for $\sigma\in S_n$ and $(v_1$, $\dots$, $v_n) \in V$.
We will use the notation $\sigma_{i_1, \dots, i_n}\in S_n$, 
to denote the 
the permutation given by $\sigma_{i_1, \dots, i_n}(j)=i_j$,  
for $j=1, \dots, n$.
\subsection{Matrix elements for coboundary operators}
As we will see in Section \ref{complexes}, the coboundary operators 
were introduced in terms of $E$-elements defined as follows. 
 For $w\in W$, the $\overline{W}$-valued function 
$E^{(n)}_W(v_1 \otimes \cdots\otimes v_n; w)$ is defined by  
\[
E^{(n)}_W(v_1 \otimes \cdots\otimes v_n; w)(z_1, \dots, z_n)
=E(Y_W(v_1, z_1)\cdots Y_W(v_n, z_n)w),  
\]
where an element $E(.)\in \overline{W}$ is 
given by 
\[
\langle w',E(.)\rangle =R(\langle w', . \rangle). 
\]
If a meromorphic function $f(z_1, \ldots , z_n)$ on a region in $\C$ which can be 
analytically extended to a rational function in $(z_1, \ldots, z_n)$, we denote 
such rational functions by $R(f(z_1, \ldots , z_n))$.   
We introduce 
\[
E^{W; (n)}_{WV}(w; v_1\otimes \cdots\otimes v_n) 
=E^{(n)}_W(v_1 \otimes \cdots\otimes v_n; w),
\]
where 
$E^{W; (n)}_{WV}(w; v_1 \otimes \cdots\otimes v_n)$ is  
an element of $\overline{W}_{z_1, \dots, z_n}$.
Next, we define 
\[
\Phi\circ \left(E^{(l_1)}_{V;\;\one}\otimes \cdots \otimes E^{(l_n)}_{V;\;\one}\right): 
V^{\otimes m+n}\to 
\overline{W}_{z_1,  \dots, z_{m+n}},
\] 
by
\begin{eqnarray*}
\lefteqn{(\Phi\circ (E^{(l_1)}_{V;\;\one}\otimes \cdots \otimes 
E^{(l_n)}_{V;\;\one}))(v_1 \otimes \cdots \otimes v_{m+n-1})}
\nn
&&=E(\Phi(E^{(l_1)}_{V; \one}(v_1 \otimes \cdots \otimes v_{l_1})\otimes \cdots
E^{(l_n)}_{V; \one}
(v_{l_1 +\cdots +l_{n-1}+1}\otimes \cdots 
\otimes v_{l_1 +\cdots +l_{n-1}+l_n}))),  
\end{eqnarray*}
and 
\[
E^{(m)}_W \circ_{m+1}\Phi: V^{\otimes m+n}\to 
\overline{W}_{z_1, \dots, z_{m+n-1}},
\]
 which is given by 
\begin{eqnarray*}
(E^{(m)}_W \circ_{m+1}\Phi)(v_1 \otimes \cdots \otimes v_{m+n})
=E(E^{(m)}_W(v_1 \otimes \cdots\otimes v_m;
\Phi(v_{m+1}\otimes \cdots\otimes v_{m+n}))). 
\end{eqnarray*}
Let us also introduce 
\[
E^{W; (m)}_{WV}\circ_0 \Phi: V^{\otimes m+n}\to  
\overline{W}_{z_1, \dots, z_{m+n-1}},
\]
 which is defined by 
\begin{eqnarray*}
(E^{W; (m)}_{WV}\circ_0 \Phi)(v_1 \otimes \cdots \otimes v_{m+n})
 =E(E^{W; (m)}_{WV}(\Phi(v_1 \otimes \cdots\otimes v_n)
; v_{n+1}\otimes \cdots\otimes v_{n+m})). 
\end{eqnarray*}
For 
\[
l_1 =\cdots=l_{i-1}=l_{i+1}=1,  
\]
\[
l_i=m-n-1,
\]
 for some $1 \le i \le n$, 
we write $\Phi\circ_i  E^{(l_i)}_{V;\;\one}$ for 
 $\Phi\circ (E^{(l_1)}_{V;\;\one}\otimes \cdots 
\otimes E^{(l_n)}_{V;\;\one})$.
\subsection{Maps composable with vertex operators}
\label{compos}
In order to increase or decrease the number of vertex operators included 
in a map $\Phi$, one has to assure convergence of resulting 
matrix elements. The composability notion 
determines the number of vertex operators that can be added to a map $\Phi$
by means of coboundary operators. 
For a $V$-module $W$, 
let 
\[
P_q: \overline{W}\to W_{(q)}, 
\]
 be the projection from $\overline{W}$ to $W_{(q)}$. 
For $m\in \N$, 
$\Phi$ is called to be composable with $m$ vertex operators 
when it satisfies: 
\begin{enumerate}
\item 
For $l_1, \dots, l_n \in \Z_+$, 
\[
\sum_{j=1}^nl_j=m+n,
\]
$(v_1, \dots, v_{m+n})\in V$ and $w'\in W'$, 
 introduce
\[
 \varphi_i
=
\varepsilon^{(l_i)}_V(v_{k_1} 
\otimes
\cdots \otimes v_{k_i} 
; \one)   
(z_{k_1}, 
 \dots, 
z_{k_i} 
),
\]
where 
\[
k_1=l_1+ \cdots +l_{i-1}+1, \; \; 
 ..., \; \;  v_{k_i}=l_1+\cdots +l_{i-1}+l_i,
\]
for $i=1, \dots, n$.
 Then it is assumed that there exist positive integers $N^n_m(v_i, v_j)$ 
depending only on $v_i$ and $v_j$ for $i, j=1, \dots, k$, $i\ne j$ such that 
\begin{equation}
\label{sumka}
\sum_{r_1, \dots, r_n\in \Z}\langle w', 
(\Phi(P_{r_1}\varphi_1 \otimes \dots\otimes
P_{r_n} \varphi_n)) (\zeta_1, \dots, 
\zeta_n)\rangle,
\end{equation} 
is absolutely convergent on the domains 
\[
|z_{l_1 +\cdots +l_{i-1}+p}-\zeta_i|  
+ |z_{l_1+\cdots +l_{j-1}+q}-\zeta_i|< |\zeta_i
-\zeta_j|,
\] 
for $i$, $j=1, \dots, k$, $i\ne j$, and for $p=1, 
\dots,  l_i$ and $q=1, \dots, l_j$. 
It is assumed that \eqref{sumka} is analytically extended to a
rational function
in $(z_1, \dots, z_{m+n})$, 
 independent of $(\zeta_1, \dots, \zeta_n)$, 
with the only possible poles at 
$z_i=z_j$, of order less than or equal to 
$N^n_m(v_i, v_j)$, for $i$, $j=1, \dots, k$,  $i\ne j$. 

\bigskip 
\item For $(v_1, \dots, v_{m+n})\in V$, it is assumed that there exist 
positive integers $N^n_m(v_i, v_j)$, depending only on $v_i$ and 
$v_j$, for $i$, $j=1, \dots, k$, $i\ne j$, such that for $w'\in W'$, 
\begin{equation}
\label{sumka1}
\sum_{q\in \C}\langle w', 
(E^{(m)}_W(v_1\otimes \cdots\otimes v_m; 
P_q((\Phi(v_{1+m}\otimes \cdots\otimes v_{n+m}))(z_{1+m}, \dots, z_{n+m})))\rangle, 
\end{equation}
is absolutely convergent when $z_i \ne z_j$, $i\ne j$
$|z_i|>|z_k|>0$ for $i=1, \dots, m$, and 
$k=m+1, \dots, m+n$. 
 It is assumed  that \eqref{sumka1} 
can be analytically extended to a
rational function 
in $(z_1, \dots, z_{m+n})$ with the only possible poles at 
$z_i=z_j$, of orders less than or equal to 
$N^n_m(v_i, v_j)$, for $i, j=1, \dots, k$, $i\ne j$,. 
\end{enumerate}
\section{Vertex algebra bicomplexes} 
\label{complexes}
In this Section we recal the notion of the chain-cochain bicomplex 
associated to a grading-restricted vertex algebra $V$. 
In Section \ref{invariants} we determine the structure of a bigraded differential algebra  
for these complexes. 

Let start with recall the definition of shuffles.   
For $l \in \N$ and $1\le s \le l-1$, let $J_{l; s}$ be the set of elements of 
$S_l$ which preserve the order of the first $s$ numbers and the order of the last 
$l-s$ numbers, i.e., 
\[
J_{l, s}=\{\sigma\in S_l \;|\;\sigma(1)<\cdots <\sigma(s),\;
\sigma(s+1)<\cdots <\sigma(l)\}.
\]
The elements of $J_{l; s}$ are called shuffles. Let us denote also 
\[
J_{l; s}^{-1}=\{\sigma\;|\; \sigma\in J_{l; s}\}. 
\]
The shuffles are used in the definition of $C^n_m(V, W)$ spaces in what follows,
 and play an essential role 
in the chain-cochain conditions \eqref{berta1}--\eqref{berta2}.   

Now we are on a position to define the space $C^n_m(V, W)$ 
of $\overline{W}_{z_1, \ldots, z_n}$-valued functions.  
\begin{definition}
For $n\in \Z_+$, let $C_0^n(V, W)$ be the vector space of all 
linear maps from $V^{\otimes n}$ to $\overline{W}_{z_1, \dots, z_n}$  
satisfying the $L(-1)$-derivative property and the $L(0)$-conjugation property. 
For $m$, $n\in \Z_+$, 
let $C_m^n(V, W)$ be the vector spaces of all  
linear maps from $V^{\otimes n}$ to $\overline{W}_{z_1, \dots, z_n}$ 
composable with $m$ vertex operators, and satisfying the $L(-1)$-derivative
property, the $L(0)$-conjugation property, and such that 
\begin{equation}
\label{shushu}
\sum_{\sigma\in J_{l; s}^{-1}}(-1)^{|\sigma|}
\sigma\left(\Phi(v_{\sigma(1)}\otimes \cdots \otimes v_{\sigma(l)})\right)=0.
\end{equation}
\end{definition}
Let us set $C_m^0(V, W)=W$. Then it was proven that  
\[
 C_m^n(V, W)\subset C_{m-1}^n(V, W), 
\]
for $m\in \Z_+$. 
For $\Phi \in C_m^n(V, W)$, 
 the coboundary operator 
\begin{equation}
\label{hatdelta}
 \delta^n_m:  C_m^n(V, W)
\to  C_{m-1}^{n+1}(V, W). 
\end{equation} 
for the bicomplex $(C_m^n(V, W), \delta^n_m)$ has the form: 
\begin{equation}
\label{marsha}
\delta^n_m(\Phi)
=E^{(1)}_W\circ_2 \Phi
+\sum_{i=1}^n(-1)^i \Phi\circ_i E^{(2)}_{V; \one}
 +(-1)^{n+1}
\sigma_{n+1, 1, \dots, n}(E^{(1)}_W\circ_2 \Phi),  
\end{equation}
where $\circ_i$ was defined in Section \ref{valued}.    
Explicitly, 
for $(v_1, \dots, v_{n+1})\in V$, $w'\in W'$ 
and $(z_1, \dots, z_{n+1})\in F_{n+1}\C$, 
\begin{eqnarray*}
\lefteqn{\langle w', (( \delta^n_m(\Phi))(v_1 \otimes \cdots\otimes v_{n+1}))
(z_1, \dots, z_{n+1})\rangle}
\nn
&&=R(\langle w', Y_W(v_1, z_1)(\Phi(v_2 \otimes \cdots\otimes v_{n+1}))
(z_2, \dots, z_{n+1})\rangle)
\nn
&&\quad +\sum_{i=1}^n (-1)^i R(\langle w', 
(\Phi(v_1 \otimes \cdots \otimes v_{i-1} \otimes 
Y_V(v_i, z_i-z_{i+1})v_{i+1}
\nn
&&\quad\quad\quad\quad\quad\quad\quad\quad\quad 
\quad\quad\otimes \cdots \otimes v_{n+1}))
(z_1, \dots, z_{i-1}, z_{i+1}, \dots, z_{n+1})\rangle)
\nn
&&\quad + (-1)^{n+1}R(\langle w', Y_W(v_{n+1}, z_{n+1})
(\Phi(v_1 \otimes \cdots \otimes v_n))(z_1, \dots, z_n)\rangle).
\end{eqnarray*}

The following particular case allows to introduce another short bicomplex.  
 For $n=2$, there exists a subspace 
of $C_0^2(V, W)$ 
containing $C_m^2(V, W)$ for all $m\in \Z_+$ such that 
$\delta^2_m$ remains defined on this subspace. 
Let $C_{\frac{1}{2}}^2(V, W)$ be the subspace of $C_0^2(V, W)$ 
consisting of elements $\Phi$ such that for $(v_1$, $v_2$, $v_3) \in V$, $w'\in W'$, 
\begin{eqnarray*}
&& \sum_{r\in \C}\big(\langle w', E^{(1)}_{W}(v_1;
P_r((\Phi(v_2 \otimes v_3))(z_2 -\zeta, z_3-\zeta)))(z_1, \zeta)\rangle
\nn
&&\quad +\langle w', (\Phi(v_1\otimes P_r((E^{(2)}_V(v_2 \otimes v_3; \one)) 
(z_2-\zeta, z_3-\zeta)))) 
(z_1, \zeta)\rangle\big), 
\end{eqnarray*}
\begin{eqnarray*}
\lefteqn{\sum_{r\in \C}\big(\langle w', 
(\Phi(P_r((E^{(2)}_V(v_1\otimes v_2; \one))(z_1 -\zeta, z_2-\zeta))
\otimes v_3))
(\zeta, z_3)\rangle}\nn
&&\quad +\langle w', 
E^{W; (1)}_{WV}(P_r((\Phi(v_1 \otimes v_2))(z_1-\zeta, z_2-\zeta));
v_3))(\zeta, z_3)\rangle\big), 
\end{eqnarray*}
are absolutely convergent in the regions 
\[
|z_1-\zeta|>|z_2-\zeta|,
\]
\[
 |z_2-\zeta|>0, 
\]
\[
|\zeta-z_3|>|z_1-\zeta|,
\]
\[
 |z_2-\zeta|>0,
\]
 respectively, 
and can be analytically extended to 
rational functions in $z_1$ and $z_2$ with the only possible poles at
$z_1, z_2=0$ and $z_1=z_2$.
It is clear that 
\[
C_m^2(V, W)\subset C_{\frac{1}{2}}^2(V, W), 
\]
for $m\in \Z_+$. 
 The coboundary operator 
\begin{equation}
\label{halfdelta}
\delta^2_{\frac{1}{2}}: C_{\frac{1}{2}}^2(V, W)
\to C_0^3(V, W),
\end{equation}
is defined by 
\begin{eqnarray}
\label{halfdelta1}
&& \delta^2_{\frac{1}{2}}(\Phi) 
= E^{(1)}_W \circ_2 \Phi 
+ \sum\limits_{i=1}^2 (-1)^i  E^{(2)}_{V, \one}  \circ_i \Phi
+ E^{W; (1)}_{WV}\circ_2\Phi, 
\end{eqnarray}
\begin{eqnarray*}
\nn
&&\langle w', (({\delta}^2_{\frac{1}{2}}(\Phi))
(v_1\otimes v_2 \otimes v_3))(z_1, z_2, z_3)\rangle
\nn
 &&
=R(\langle w', (E^{(1)}_W(v_1;
\Phi(v_2 \otimes v_3))(z_1, z_2, z_3)\rangle
\nn
&&
\quad \quad 
+\langle w', (\Phi(v_1 \otimes E^{(2)}_V(v_2 \otimes v_3; \one))) 
(z_1, z_2, z_3)\rangle)
\nn
 && \quad 
-R(\langle w', 
(\Phi(E^{(2)}_V(v_1 \otimes v_2; \one))
\otimes v_3))(z_1, z_2, z_3)\rangle
\nn
 &&\quad \quad 
+\langle w', 
(E^{W; (1)}_{WV} (\Phi(v_1 \otimes v_2); v_3))
(z_1, z_2, z_3)\rangle)
\end{eqnarray*}
for $w'\in W'$,
$\Phi\in C_{\frac{1}{2}}^2(V, W)$,
$(v_1$, $v_2$, $v_3) \in V$ and $(z_1, z_2, z_3)\in F_3\C$. 

 In \cite{Huang} one has 
\begin{proposition}
\label{delta-square} 
For $n\in \N$ and $m\in \left\{Z_+ +1\right\}$, 
\begin{equation}
\label{bigcomplex}
0\longrightarrow C_m^{0}(V, W) 
\stackrel{\delta_m^0}{\longrightarrow}
C^1_{m-1}(V, W) 
\stackrel{\delta^1_{m-1}}{\longrightarrow} \cdots 
\stackrel{\delta_1^{m-1}}{\longrightarrow} C_0^m(V, W)\longrightarrow 0,  
\end{equation}
\begin{equation}
\label{shortseq}
0\longrightarrow C_3^0(V, W)
\stackrel{\delta_3^0}{\longrightarrow}
C_2^1(V, W)
\stackrel{\delta_{2}^{1}}{\longrightarrow}C_{\frac{1}{2}}^{2}(V, W)
\stackrel{\delta_{\frac{1}{2}}^{2}}{\longrightarrow}
C_0^3(V, W)\longrightarrow 0. 
\end{equation}
\begin{equation}
\label{berta1}
 \delta^{n+1}_{m-1}\circ \delta^n_m=0,
\end{equation} 
\begin{equation}
\label{berta2}
\delta^2_{\frac{1}{2}}\circ {\delta}^1_2=0.
\end{equation}
are chain-cochain bicomplexes 
 with respect to  
the coboundary operators \eqref{marsha} and \eqref{halfdelta1}.  
\end{proposition}
In what follows we omit $(V, W)$ from notations of complexes. 
\section{The multiplication of elements of the chain complex spaces} 
\label{multiplication}
In this Section  
we introduce the definition of the simplest variant of converging multiplication $*_l$ 
 of $l$ elements of bicomplex spaces  
with the image in another bicomplex space coherent with respect 
 to the original differential \eqref{hatdelta}, 
and satisfying the symmetry \eqref{shushu},  
$L_V(0)$-conjugation \eqref{loconj}, and $L_V(-1)$-derivative \eqref{lder1}--\eqref{lder2} properties 
described in Section \ref{valued}. 

Let $k_0=0$. 
We consider $l$ sets of grading-restricted vertex algebra elements 
$(v_{\sum_{j=1}^i k_{j-1}+1}$, $\ldots$, $v_{\sum_{j=1}^i k_{j-1}+k_j})$ 
and corresponding formal parameters  
$(x_{\sum_{j=1}^i k_{j-1}+1}$, $\ldots$, $x_{\sum_{j=1}^i k_{j-1}+k_j})$
for elements 
\[
\Phi_i(v_{\sum_{j=1}^i k_{j-1}+1} \otimes \ldots \otimes  
v_{\sum_{j=1}^i k_{j-1}+k_j})
(x_{\sum_{j=1}^i k_{j-1}+1}, \ldots, x_{\sum_{j=1}^i k_{j-1}+k_j}) \in C^{k_i}_{m_i}, 
\]    
of the bicomplexes \eqref{bigcomplex} and \eqref{shortseq}. 
It is assumed that $(x_{\sum_{j=1}^i k_{j-1}+1}, \ldots, x_{\sum_{j=1}^i k_{j-1}+k_j})$
belong to $F_{k_i}\C$.
The same property we require from the set of formal parameter for a multiplication of 
 elements of $l$ spaces $C^{k_i}_{m_i}$, $1 \le i \le l$.  
Therefore, according to the 
 definition of the configuration space $F_n\C$, 
for each group of coinciding formal parameters among $l$ sets
 $(x_{\sum_{j=1}^i k_{j-1}+1}, \ldots, x_{\sum_{j=1}^i k_{j-1}+k_j})$
we keep only one parameter at the first appearance. 
Since the elements $\Phi_i$ are rational functions given my matrix elements,  
 exclusions of formal parameters leads to exclusions of corresponding monoms  
for $x_{i_j}=x_{i'_{j'}}$, $1 \le i_j \le k_i$, $1 \le i'_{j'} \le k_{i'}$,  
 $1 \le j\ne j' \le l$.  

Let $r_i$, $1 \le i \le l$ be the number of dropped formal parameters in each of $l$ sets. 
The same procedure we do for $l$ sets of vertex operators composable with $\Phi_i$, and 
 we drop $t_i$ vertex operators.  
Then resulting sets contain $k_i-r_i$ parameters and 
$\Phi_i$ belong to $C^{k_i-r_i}_{m_i-t_i}$ composable with $m_i-t_i$ vertex operators.  
The operation of exclusion will be denotes by $\widehat{f}$. 

 Define $n_i=\sum\limits_{j=1}^i (k_{j-1}-r_{j-1})$.  
Let us put $k=\sum\limits_{i=1}^l (k_i-r_i)$.  
We obtain $l$ sets of formal parameters 
 $(x_{n_i+1}, \ldots, x_{n_i+k_i}) \in F_{k_i}\C$. 
 Then, we define   
\begin{eqnarray}
\label{zsto}
&& (z_1, \ldots, z_k)= 
(x_{n_1+1},  \ldots, x_{n_1+k_1-r_1}, \ldots, x_{n_l+1}, \ldots, x_{n_l+k_l-r_l}), 
\end{eqnarray} 
\begin{eqnarray}
\label{zsto1}
({\rm v}_1, \ldots, {\rm v}_k)= 
(v_{n_1+1},  \ldots, v_{n_1+k_1-r_1}, \ldots, v_{n_l+1}, \ldots, v_{n_l+k_l-r_l}). 
\end{eqnarray} 
In what follows we will use \eqref{zsto}, \eqref{zsto1} for notating formal parameters as well as 
vertex algebra elements of the multiplication. 
The simplest possible multiplication of elements 
of several $C^{k_i}_{m_i}$-spaces, $1 \le i \le l$, 
 is defined by  
multiplications of matrix elements of the form \eqref{pochta} summed over a $V_{(l)}$-basis for $l\in \Z$.  
\begin{definition}
\label{paraska}
For 
$\Phi_i(v_{n_i+1} \otimes \ldots \otimes v_{n_i+k_i-r_i})  
(x_{n_i+1}, \ldots,  x_{n_i+k_i-r_i})  \in  C^{k_i-r_i}_{m_i-t_i}$,    
and non-vanishing $\zeta_{1, i}$, $\zeta_{2, i} \in \C$, such that  
 $\zeta_{1, i} \zeta_{2, i} = \epsilon$,    
$1 \le i \le l$, 
 we define the multiplication 
\begin{eqnarray}
\label{gendef}
 *_l: \times_{i=1}^l \Phi_i(v_{n_i+1} \otimes \ldots \otimes v_{n_i+k_i}) (x_{n_i+1}, \ldots,  x_{n_i+k_i})   
  \qquad \qquad \qquad \qquad  \qquad \qquad \qquad   && 
\nn
\;  \mapsto 
\widehat{R} \; \Phi\left( v_1 \otimes \ldots \otimes v_{k_1}  
\otimes v_{k_1+1} \otimes \ldots 
\otimes 
 v_{n_l+k_l} \right) 
\left(x_1, \ldots, x_{k_1}, x_{k_1+1}, \ldots, x_{n_l+k_l}; \epsilon, \zeta_{a, i} \right), \qquad &&   
\end{eqnarray}
\begin{eqnarray}
\label{Z2n_pt_epsss}
&& \langle w',  \widehat{R} \; \Phi  
\left( v_1 \otimes \ldots \otimes v_{k_1} \otimes v_{k_1+1} \otimes \ldots 
\otimes 
 v_{n_l+k_l} \right) 
\left(x_1, \ldots, x_{k_1}, x_{k_1+1}, \ldots, x_{n_l+k_l} ; \epsilon, \zeta_{a, i}\right)    
\rangle 
\nn 
&&   =  
R   \sum_{n \in \Z} \sum_{u_n\in V_{(n)} }   \epsilon^n 
 \langle w', \prod\limits_{i=1}^l Y^W_{WV} \left(   
\Phi_i({\rm v}_{n_i+1} \otimes \ldots \otimes {\rm v}_{n_i+k_1-r_1} \otimes u_n) \right. 
\nn
&& \left. \qquad \qquad \qquad \qquad \qquad \qquad \qquad 
(z_{n_i+1}, \ldots, z_{n_i+k_i-r_i}, \zeta_{1,l})    
, \zeta_{2, l} \right)\; \overline{u}_n \rangle,     
\end{eqnarray}
 for only those converging matrix elements 
\begin{eqnarray}
\label{element0}
 \mathcal M^{(i)}(w'; z_{n_i+1}, \ldots, z_{n_i+k_i-r_i}; u_n, \zeta_{a, i}) 
\qquad \qquad \qquad \qquad \qquad \qquad \qquad \qquad \qquad 
 &&
\nn
  =\langle w', Y^W_{WV} \left(  
\Phi_i({\rm v}_{n_i+1} \otimes \ldots \otimes {\rm v}_{n_i+k_i-r_i} \otimes u_n)    
(z_{n_i+1}, \ldots, z_{n_i+k_i-r_i}, \zeta_{1,i})   
, \zeta_{2, i} \right)\; \overline{u}_n \rangle,  &&
\end{eqnarray}
such that \eqref{Z2n_pt_epsss} is absolutely convergent as a series in $\epsilon$.  
\end{definition} 
The multiplication is parametrized by non-vanishing 
$\zeta_{1, i}$, $\zeta_{2, i}  \in \C$, $1 \le i \le l$.  
The sum 
is taken over any $V_{(n)}$-basis $\{u_n\}$,   
where $\overline{u}_n$ is the dual of $u_n$ with respect to a non-degenerate bilinear form 
 $\langle .\ , . \rangle$, 
 \eqref{formain}  
over $V$. 
The operation $\widehat{R}$ eliminates $r=\sum\limits_{i=1}^lr_i$ coinciding formal parameters 
from $\Phi$ in the sets $(x_{n_i+1}, \ldots, x_{n_i+k_i})$ 
and excludes all monomials $(x_{i_j} - x_{j_j})$, $1 \le i_j \le k_i$, 
$1 \le j \le l$ in   
\eqref{Z2n_pt_epsss}. 
By the standard reasoning \cite{FHL, Zhu}, 
 \eqref{Z2n_pt_epsss} does not depend on the choice of a basis of $u_n \in V_{(n)}$, $n \in \Z$.   
The form 
of the multiplication defined above is natural in terms 
of the theory of characters for vertex operator algebras 
\cite{TUY, FMS, Zhu}.   

In the case of converting 
\eqref{Z2n_pt_epsss} 
we introduce the action of the partial derivative  
$\partial_p=\partial_{z_p}={\partial}/{\partial_{z_p}}$, 
with respect to the $p$-th entry of 
 $(z_1,  \ldots,  z_k)$, 
 $1\le p \le k$, 
 as follows 
\begin{eqnarray}
\label{Z2n_pt_eps1qdef}
 & &
\langle w', \partial_p  
\Phi
({\rm v}_1 \otimes  \ldots \otimes {\rm v}_k)  
(z_1,  \ldots,  z_k; \epsilon, \zeta_{a, i}) 
\rangle 
\nn
 & &  = \sum_{n \in \mathbb{Z}
} \epsilon^n \sum_{u_n\in V_{(n)}}   
\langle w', \partial^{\delta_{p,i}}_{z_i}  \prod_{i=1}^l 
Y^W_{WV}\left( 
   \Phi_i( {\rm v}_{n_i+1} \otimes   \ldots \otimes {\rm v}_{n_i+k_i-r_i} \otimes u_n) 
\right. 
\nn
&&
\left. 
\qquad (z_{n_i+1}, \ldots, z_{n_i+k_i-r_i}, \zeta_{1, i})  
, \zeta_{2, i} \right)\; \overline{u}_n \rangle.    
\end{eqnarray}
Similarly, we define the 
action of an element $\sigma \in S_k$ on the  
for convering \eqref{Z2n_pt_epsss}   
as
\begin{eqnarray}
\label{Z2n_pt_epsss1}
&& \langle w',  \sigma(\widehat{R}\;\Phi) 
({\rm v}_1 \otimes \ldots 
\otimes 
{\rm v}_k) 
(z_1,  \ldots,  z_k ; \epsilon, \zeta_{a, i})    
) \rangle 
\nn
&&
\qquad =\langle w',  \Phi 
({\rm v}_{\sigma(1)} \otimes \ldots  
 \otimes 
 {\rm v}_{\sigma(k)}) 
(
z_{\sigma(1)}, 
  \ldots, 
z_{\sigma(k)}; \epsilon, \zeta_{a, i} 
) \rangle 
\nn 
& & \qquad =  
\sum\limits_{n \in \Z} 
\sum_{u_n\in V_{(n)} }  
 \langle w', \prod_{i=1}^l Y^W_{WV}\left(   
\Phi_i 
({\rm v}_{\sigma(n_i+1)} \otimes \ldots \otimes {\rm v}_{\sigma(n_i+k_i-r_i)} \otimes u_n)   
\right. 
\nn
&&
\left.
\qquad \qquad (z_{\sigma(n_i+1)}, \ldots,  z_{\sigma(n_i+k_i-r_i)}, \zeta_{1, i})  
, \zeta_{2, i}\right)\; \overline{u}_n \rangle.   
\end{eqnarray}
Let $t$ be the number of common vertex operators the mappings  
$\Phi_i({\rm v}_{n_i+1} \otimes \ldots {\rm v}_{n_i+k_i-r_i})$     
$(z_{n_i+1}$,   $\ldots$, $z_{n_i+k_i-r_i})   
\in C^{k_i-r_i}_{m_i-t_i}$, 
are composable with.  
Let us put $m=\sum\limits_{i=1}^l m_i-t_i$. 
 We then have   
\begin{proposition}
\label{tolsto}
For $\Phi_i ({\rm v}_{n_i+1} \otimes  
\ldots \otimes {\rm v}_{n_i+k_i-r_i}) (z_{n_i+1}, \ldots, z_{n_i+k_i-r_i})   
\in C_{m_i-t_i}^{k_i-r_i}$ 
and converging \eqref{element0},   
the multiplication  
$\widehat{R} \; \Phi 
\left({\rm v}_1 \otimes \ldots  \otimes  {\rm v}_k \right)   
\left(z_1, \ldots,  z_k; \epsilon, \zeta_{a, i}\right)$,   
 \eqref{Z2n_pt_epsss} 
belongs to the space $C^k_m$, i.e.,    
\begin{equation}
\label{toporno}
*_l: \times_{i=1}^l C^{k_i-r_i}_{m_i-t_i} \to  C_m^k,  
\end{equation}
with non-vanishing $\zeta_{1, i}$, $\zeta_{2, i}$, such that 
$\zeta_{1, i} \zeta_{2, i} = \epsilon$,   
$1 \le i \le l$. 
\end{proposition}
In what follows, let us skip $\epsilon$ and $\zeta_{a, i}$, $a=1$, $2$, $1\le i \le l$ from 
the notation of $\Phi\left({\rm v}_1 \otimes \ldots  \otimes  {\rm v}_k \right)   
\left(z_1, \ldots,  z_k; \epsilon, \zeta_{a, i} \right)$. 
\begin{proof} 
We show that
 \eqref{Z2n_pt_epsss} converges to a    
$\overline{W}$-valued rational function defined on the configuration space $F\C_k$,   
for formal variables 
with only extra possible poles at $z_{i_q}=z_{j_q}$, $....1 \le i_q \le k$, $1 \le j \le k$,  
satisfies \eqref{shushu}, $L_V(0)$-symmetry \eqref{loconj}, 
and $L_V(-1)$-derivative \eqref{lder1}--\eqref{lder2} 
conditions, 
and composable with $m$ vertex operators.  
In order to prove convergence of a multiplication 
of elements of the spaces $C_{m_i-t_i}^{k_i-r_i}$, $1\le i \le l$, 
we use a geometrical interpretation \cite{H2, Y} of \eqref{Z2n_pt_epsss}
in terms of sewing Riemann surfaces with marked points. 
Recall that a $C_m^k$-space 
is defined by means of matrix elements \cite{FHL} of the form \eqref{pochta},
 and satisfying $L_V(0)$-conjugation,  
$L_V(-1)$-derivative conditions, \eqref{shushu}, and composable with $m$ vertex operators.  
For a vertex algebra $V$, and it module $W$, satisfying certain extra conditions \cite{TZ},  
one associates elements of a space $C_m^k$ with the data on the Riemann sphere.  
In particular, formal parameters of $C_m^k$-elements and vertex operators they are composable to, are 
 identified with local coordinates of marked points on a sphere.  

For a pair of spaces $C_{m_i}^{k_i}$, $1 \le i \le l$, 
 we consider data on $l$ Riemann spheres. 
Two extra points are chosen for centers of annuli used in order 
to sew spheres \cite{Y, H2} to obtain another sphere.     
The resulting multiplication \eqref{Z2n_pt_epsss}  
represents a sum of multiplications of matrix elements originated from 
$l$ original Riemann spheres.   
Two sets of non-vanishing complex parameters $\zeta_{1, i}$, $\zeta_{2, i}$, $1 \le i \le l$,  
 of \eqref{Z2n_pt_epsss} are identified with coordinates  
on annuli. 
After identification of annuli 
$r$ coinciding coordinates may occur. 
This takes into account case of coinciding formal parameters.

For $l$ sets of vertex algebra elements $(v_{n_i+1}, \ldots, v_{n_i+k_i})$,   
and $l$ sets of  
formal complex parameters 
$(z_{n_i+1}, \ldots, z_{n_i+k_i})$,  
 the formal parameters are identified with the local coordinates of 
$k_i-r_i$ points on the $i$-copy of the Riemann sphere $\widehat{\Sigma}^{(i)}$, 
 with excised annuluses ${\mathcal A}_{a, i}$, $a=1$, $2$.    
 We assume that the sewing parameter condition \cite{Y} is 
$\zeta_{1, i} \zeta_{2, i} = \epsilon$,  
$1 \le i \le l$. 
 In Riemann surface $\epsilon$-sewing formulation, 
the complex parameters $\zeta_{a, i}$, $a=1$, $2$ are coordinates inside identified annuluses,  
and 
$0< |\zeta_{a, i}|\le r_{a, i}$.  
Then, we obtain 
\begin{eqnarray}
\nonumber
& & 
\langle w',  
 R \Phi({\rm v}_1 \otimes \ldots \otimes {\rm v}_k) (z_1, \ldots, z_k; \epsilon, \zeta_{a, i})\rangle     
\\
\label{Z2n_pt_eps1q11}
 & &  \quad = \sum\limits_{n\in \Z} \epsilon^n \sum_{u \in V_{(n)}}    
\prod_{i=1}^l 
\langle w', Y^W_{WV}\left(  
\Phi_i({\rm v}_{n_i+1} \otimes \ldots \otimes {\rm v}_{n_i+k_i-r_i} \otimes u_n)
\right. 
\nn
&&
\qquad \qquad \qquad \qquad \qquad \qquad \qquad \qquad \left. 
 (z_{n_i+1}, \ldots, z_{n_i+k_i-r_i}, \zeta_{1, i} \right)\; \overline{u}_n  \rangle,   
\end{eqnarray}
\begin{eqnarray*}
&&
=  \sum_{n\in \mathbb{Z} } \epsilon^n \sum_{u\in V_{(n)}}   
\prod_{i=1}^l
\langle w', e^{\zeta_{2, i} \; L_W{(-1)}} \; Y_W \left( \overline{u}, -\zeta_{2, i}\right) \; 
\Phi_i({\rm v}_{n_i+1} \otimes \ldots \otimes {\rm v}_{n_i+k_i-r_i} \otimes u_n) 
\nn
&&
\qquad \qquad \qquad \qquad \qquad \qquad  \qquad \qquad \qquad \qquad  \qquad \qquad 
(z_{n_i+1},  \ldots, z_{n_i+k_i-r_i}, \zeta_{1, i})  \rangle.  
\end{eqnarray*}
Recall that we assume that 
the matrix elements \eqref{element0} 
are absolutely convergent in powers of 
$\epsilon$ with some radia of convergence $R_{a, i}\le 
r_{a, i}$, with  
$0 < |\zeta_{a, i}|\le R_{a, i}$, $1 \le i \le l$. 
The dependence of \eqref{element0} on $\epsilon$ is expressed via the relation 
$\epsilon=\zeta_{1, i} \zeta_{2, i}$.  
Let us rewrite the multiplication \eqref{Z2n_pt_eps1q11} as 
\begin{eqnarray}
\label{pihva}
&& \langle w',  
 R\Phi({\rm v}_1 \otimes \ldots \otimes {\rm v}_k)(z_1, \ldots, z_k; \epsilon, \zeta_{a, i}) \rangle      
\nn
&& 
\qquad = 
  \sum\limits_{n \in Z} \epsilon^n \left(  \prod_{i=1}^l \langle w',   
 \Phi({\rm v}_1 \otimes \ldots \otimes {\rm v}_k \otimes u_n)
 (z_1, \ldots, z_k, \zeta_{1, i})  \rangle \right)_n  
\nn
&&
=  \sum_{n \in \mathbb{Z}} 
 \sum_{u_n\in V_{(n)}} \sum\limits_{q \in \C}
 \epsilon^{l-q-1} \; \prod_{i=1}^l
 \left( {\mathcal M}^{(i)}(z_{n_i+1}, \ldots, z_{n_i+k_i-r_i}; u_n, \zeta_{a, i}) \right)_q, 
\end{eqnarray}
  as a formal series in $\epsilon$    
for $0 < |\zeta_{a, i}|\leq R_{a, i}$, where  
$|\epsilon |\leq r$ for $r<r_{a, i} r_{a, i}$.  
For $R_i={\rm max}\left\{R_{a, i}\right\}$, $a=1$, $2$, 
 we apply 
 Cauchy's inequality to the 
 coefficient forms \eqref{element0} 
 to find 
\begin{equation}
\label{Cauchya}
 \left| \left({\mathcal M}^{(i)}  
(z_{n_i+1}, \ldots, z_{n_i+k_i-r_i}; u_n, \zeta_{a, i}) \right)_q \right| \leq {M_i} {R_i^{-q}},     
\end{equation}
with 
\[
M_a=\sup_{ 0 < \left| \zeta_a \right| \leq R_a,\; \left|\epsilon \right| \leq r} 
\left| \mathcal M^{(i)}(z_{n_i+1}, \ldots, z_{n_i+k_i-_i}; u_n, \zeta_a ) \right|.    
\]
Using \eqref{Cauchya} we obtain for \eqref{pihva}, 
$M={\rm min} \left\{M_i \right\}$,  and $M={\rm max} \left\{R_i \right\}$
 $1 \le i \le l$,  
\begin{eqnarray}
\label{borona} 
&&
\left| 
 \left(\langle w',   
 R\Phi ({\rm v}_1 \otimes \ldots \otimes {\rm v}_k) (z_1, \ldots, z_k; \epsilon, \zeta_{a, i})  
\rangle \right)_n\right|
\nn
&&
\qquad \le   
 \prod_{i=1}^l \left|\left(\mathcal M^{(i)} (z_{n_i+1}, \ldots, z_{n_i+k_i-r_i}; u_n, \zeta_1) \right)_n\right|  
\nn
&&
\qquad  \le  
\prod_{i=1}^lM_i R_i^{-n+q+1} \le \left(MR\right)^{l(-n+q+1)}.    
\end{eqnarray}
 We see that \eqref{Z2n_pt_eps1q11} is absolute convergent 
 as a formal series in $\epsilon$   
 and defined 
for $0 < |\zeta_a|\leq r_a$,  
$|\epsilon |\leq r$ for $r<r_1r_2$, 
 with extra poles 
only at $z_{i_j}=z_{i'_{j'}}$, $1\le i_j \le k_i-r_i$, $1\le {i'}_{j'} \le k_{i'}-r_{i'}$,  
$1\le j \ne j' \le l$.  
Elements $\Phi_i \in C^{k_i}_{m_i}$ 
 are defined on the configuration spaces $F\C_{k_i}$.     
The extension is from $l$ original Riemann spheres to the sphere formed by the sewing procedure. 
The construction of \eqref{Z2n_pt_epsss} gives the $\overline{W}$-valued 
rational function on the configuration space $F\C_k$.   
By construction, the element $R\Phi$  
is analytically extendable to 
a $\overline{W}$-valued function defined on the configuration space  
 $F\C_k=\left\{(z_1,  ...,  z_k):  z_i \neq z_j, i \neq j \right\}$.     
Due to 
the construction 
of the multiplication \eqref{Z2n_pt_epsss}, 
 for every $u_n \in V_{(n)}$, each summand in \eqref{Z2n_pt_epsss} defines a rational function 
  on the configuration space $F\C_k$.    
By direct substitution we prove that 
 the multiplication \eqref{Z2n_pt_epsss} satisfies the $L_V(-1)$-derivative 
\eqref{lder1}--\eqref{lder2} and $L_V(0)$-conjugation \eqref{loconj} properties. 
Using the definition of the action of an element 
$\sigma \in S_k$ on the multiplication \eqref{Z2n_pt_epsss},  
we prove \eqref{shushu} for \eqref{Z2n_pt_epsss}. 

Recall that 
$\Phi_i({\rm v}_{n_i+ 1} \otimes \ldots \otimes {\rm v}_{n_i+ k_i})  
(z_{n_i + 1}, \ldots,  z_{n_i+k_i})$, 
are composable with $m_i$ vertex operators 
with $l$ sets of vertex algebra elements 
$(v'_{p_i+1}, \ldots, v'_{p_i+m_i})$, 
and $l$ sets of corresponding formal parameters 
$(z'_{p_i+1}, \ldots, z'_{p_i+m_i})$ for $p_i=\sum\limits_{j=1}^i (m_{j-1}-t_{j-1})$.  
We denote by $({\rm v}'_{p_i+1}, \ldots, {\rm v}'_{p_i+m_i-t_i})$ $l$ sets of vertex operators 
after dropping $t$ coinciding vertex operators. 
Next, we show that 
 \eqref{Z2n_pt_epsss}  
is composable with $m$ vertex operators. 
 We redefine the notations for the 
set
\begin{eqnarray*}
  &&(v''_1, \ldots, v''_k; v''_{k+1}, \ldots, v''_{k+m})  
\nn
&&
=({\rm v}_1, \ldots, {\rm v}_{k_1-r_1}; {\rm v}'_1, \ldots, {\rm v}'_{m_1-t_1}; \ldots; 
{\rm v}_{n_l+1}, \ldots, {\rm v}_{n_l+k_l-r_l}; 
 {\rm v}'_{p_l+1}, \ldots, {\rm v}'_{p_l+m_l-t_l}), 
\end{eqnarray*}
 of vertex algebra $V$ elements and similarly 
for formal parameters $z''_j$, $1 \le j \le k+m$.  
Let us consider the first condition of 
composability for the multiplication 
\eqref{Z2n_pt_epsss}   
with a number of vertex operators. 
Introduce $l''_1, \dots, l''_k \in \Z_+$,     
 such that $\sum_{j=1}^k l''_j = k+m$.    
Define  
 \begin{eqnarray}
\label{psiinew}
\varphi''_i
&
=
&
\varepsilon^{(l''_{i''})}_V 
(v''_{k''_1}\otimes  
 \ldots \otimes
v''_{k''_{i''}}
 ; \one)
(z_{k''_1}- \zeta''_{i''},  
 \ldots, 
 z_{k''_{i''}}- \zeta''_{i''}   
),        
\end{eqnarray}
where for $l''_0=1$, 
\begin{eqnarray}
\label{ki}
 {k''_1}=\sum_{i=0}^{i''-1} l''_j, 
 \quad  \ldots, \quad  {k''_{i''}}={l''_1 +\ldots +l''_{i''-1}+l''_{i''}},   
\end{eqnarray} 
for $1 \le i'' \le k$,  
and we take 
\[
(\zeta''_1, \ldots, \zeta''_k)= (\zeta_1, \ldots, \zeta_{k_1}; \zeta'_1, \ldots, \zeta'_k).  
\]  
Then we consider 
\begin{eqnarray}
\label{Inmdvadva}
  \mathfrak I^k_m(\widehat R\; \Phi)=    
\sum_{r''_1, \dots, r''_k\in \Z} 
 \langle w',  
\widehat R\; \Phi (P_{r''_1}\varphi''_1 \otimes 
 \ldots \otimes 
P_{r''_k} \varphi''_k)  
( \zeta''_1, 
 \ldots, 
 \zeta''_k; \epsilon, \zeta_{a, i}) 
\rangle, &&
\end{eqnarray} 
and prove it is absolutely convergent with some conditions. 
 The condition   
\begin{eqnarray}
\label{granizy1000000}
&&
|z_{l''_1+\ldots +l''_{i-1}+p''} -\zeta''_i| 
+ |z_{l''_1+\ldots +l''_{j-1}+q''} -\zeta''_i|< |\zeta''_i -\zeta''_j|,  
\end{eqnarray} 
of absolute convergence for \eqref{Inmdvadva} for $1'\le i''\ne j''  \le k$, 
and for $1 \le p'' \le l''_i$ and $1 \le q'' \le l''_j$, follows from corresponding conditions
for $\Phi_i$, $1 \le i \le l$.  
 We obtain 
\begin{eqnarray*}
 && \left|\mathfrak I^k_m(\widehat R \; \Phi)\right|  
\le \prod\limits_{i=1}^l \left|\mathfrak I^{k_i-r_i}_{m_i-t_i}(\Phi_i)\right|.    
\end{eqnarray*} 
Thus, we infer that \eqref{Inmdvadva}
is absolutely convergent. 
Recall that 
the maximal orders of possible poles of \eqref{Inmdvadva} 
are $N^{k_i-r_i}_{m_i-t_i}(v_i, v_j)$,  
 at $z_{i_j}=z_{i'_{j'}}$, $1 \le i_j \le k_i$, $1 \le j \le l$,  
$z_{i'}=z_{j'}$, $k_1+1 \le i'$, $j' \le k$.  
From the last expression we deduce that 
there exist 
  positive integers $N^k_m(v''_{i''}, v''_{j''})$  
%
 for $1 \le i\ne j \le k_1$, $1 \le i'\ne j\le k_2$,  
depending only on $v''_{i''}$ and $v''_{j''}$ for  $1\le  i''\ne j'' \le k$, 
such that 
 the series \eqref{Inmdvadva} 
can be analytically extended to a
rational function
in $(z_1, \dots, z_{k_1}; z_{k_1+1}, \ldots, z_k)$,      
 independent of $(\zeta''_1, \dots, \zeta''_k)$,     
with extra 
 possible poles at  
 and $x_i=y_j$,  of order less than or equal to 
$N^k_m(v''_{i''}, v''_{j''})$, for $1 \le i''\ne j'' \le k_2$. 

Let us proceed with the second condition of composability. 
For the multiplication \eqref{Z2n_pt_epsss} we obtain 
  $(v''_1, \dots, v''_{k+m})\in V$,   
and 
$(z_1, \ldots $, $z_{k+m} )\in \C$,   
we find 
positive integers  
$N^k_m(v'_i, v'_j)$,   
 depending only on $v'_i$ and 
$v''_j$, for $i''$, $1 \le j'' \le k$, $i''\ne j''$, such that for arbitrary $w'\in W'$. 
Under conditions
\begin{eqnarray}
\label{granizy2}
z_{i''}\ne z_{j''}, \quad i''\ne j'', \quad 
\nn
|z_{i''}|>|z_{k'''}|>0, 
\end{eqnarray}
 for $1 \le i'' \le m$, and $m+1 \le k''' \le m+k$, 
let us introduce 
\begin{eqnarray}
\label{perda}
&&
 \mathfrak J^k_m(\widehat R \; \Phi) = \sum_{q\in \C}  
\langle w', E^{(m)}_W \Big(  
v''_1 \otimes \ldots \otimes 
v''_m;   
 P_q( \widehat R \; \Phi 
( v''_{m+1} \otimes  
\ldots \otimes 
v''_{m+k}  
)
\nn
&&
\qquad \qquad \qquad \qquad \qquad \qquad   ( z_{m+1},  
\ldots, 
z_{m+k}; \epsilon, \zeta_{a, i})  
)\Big) 
(z_1,  \ldots, 
 z_m)\rangle.  
\end{eqnarray}
We then obtain 
\begin{eqnarray*}
&&
|\mathfrak J^k_m(\widehat R \; \Phi) |  
\le \prod\limits_{i=1}^l \left| 
\mathfrak J^{k_i-r_i}_{m_i-t_i}(\Phi_i) \right|,    
\end{eqnarray*}
where we have used 
the invariance of \eqref{Z2n_pt_epsss} with respect to 
$\sigma \in S_{k+m}$.  
 $\mathcal J^{k_1}_{m_1}(\Phi_1)$ and $\mathcal J^n_{m'}(\Phi_2)$
 in the last expression 
are absolute convergent.  
Thus, we infer that  $\mathfrak J^k_m(\widehat R \; \Phi)$  
is absolutely convergent, and 
the sum \eqref{Inmdvadva} 
 is analytically extendable to a  rational function  
in $(z_1, \dots, z_{k+m})$ with the only possible poles at 
$x_i=x_j$, $1 \le i$, $j \le k_1$, $y_{i'}=y_{j'}$, $1 \le i'$, $j'\le k$, 
 and extra possible poles    
at $x_i=y_{j'}$, i.e., the only possible poles at 
$z_{i''}=z_{j''}$, of orders less than or equal to 
$N^k_m(v''_{i''}, v''_{j''})$,    
for $i''$, $j''=1, \dots, k'''$, $i''\ne j''$.  
This finishes the proof of the proposition. 
\end{proof}
The multiplication admits the action of 
the differential operator $\delta^k_m$ defined in   
\eqref{marsha} and \eqref{halfdelta1} for  
elements $\Phi_i \in C^{k_i-r_i}_{m_i-t_i}$, $1 \le i \le l$.     
 The co-boundary operators \eqref{marsha} and \eqref{halfdelta1} 
 possesse a variation of Leibniz law with respect to the $*$-multiplication.  
By direct computation one checks 
\begin{proposition}
\label{tosya}
For arbitrary $w' \in W'$, 
  $\Phi_i({\rm v}_{n_i+1} \otimes  
\ldots
 \otimes {\rm v}_{n_i+k_i-r_i})$ 
 $(z_{n_i+1}$,  $\ldots$,  $z_{n_i+k_i-r_i})\in C_{m_i-t_i}^{k_i-r_i}$, $1 \le i \le l$, 
the action of $\delta_m^k$ on  the converging \eqref{Z2n_pt_epsss} is given by   
\begin{eqnarray}
\label{leibniz0}
 && 
\langle w', 
\delta_m^k  (R \Phi({\rm v}_1 \otimes    
 \ldots \otimes {\rm v}_k)  
(z_1,   \ldots,   z_k; \epsilon, \zeta_{a, i}) 
  \rangle \qquad \qquad \qquad \qquad \qquad \qquad 
\qquad \qquad \qquad \qquad  
\qquad \qquad \qquad \qquad \qquad \qquad   
\nn
&&
 \; = 
 \langle w', \sum_{q=1}^l (-1)^{q+1} \prod_{i=1}^l  
\left( 
\delta^{k_i-r_i}_{m_i-t_i} \Phi_i ({\rm v}_{n_i+1} \otimes   
  \ldots \otimes {\rm v}_{n_i+k_i-r_i})   
(z_{n_i+1},  \ldots, z_{n_i+k_i-r_i})  \right) \rangle. 
\nn
&& 
\end{eqnarray}
The multiplication \eqref{Z2n_pt_epsss} extends the chain-cochain 
complex 
structure of Proposition \ref{delta-square}
to all multiplications 
 $*_{i=1}^l C^{k_i-r_i}_{m_i-t_i}$,    
$1 \le i \le l$,
 $m_1$, $m_2 \ge 0$.   
\end{proposition}
\begin{proof}
According to \eqref{marsha} (correspondingly, \eqref{halfdelta1}), the action of 
$\delta_m^k$   
on $R\Phi({\rm v}_1 \otimes \ldots \otimes {\rm v}_k)$  
 $(z_1, \ldots, z_k$; $\epsilon$;       
   $\zeta_{1, i}$, $\zeta_{2, i} )$,  
it is given by  
(we assume, as before, that the vertex operator 
$Y_V ({\rm v}_j, z_j - z_{j+1})$   
does not act on $(u_n, \zeta_{a, i})$)   
\begin{eqnarray}
\label{paros}
 && \langle w', \delta_m^k R \;  
\Phi({\rm v}_1\otimes \ldots \otimes {\rm v}_k) (z_1, \ldots, z_k; \epsilon, \zeta_{a, i}) \rangle    
\nn
 && \qquad \; = \langle w', 
  \sum_{j=1}^k(-1)^j \;    
  R\; \; \Phi ({\rm v}_1 \otimes \ldots  {\rm v}_{j-1} \otimes       
Y_V ({\rm v}_j, z_j - z_{j+1}) {\rm v}_{j+1} 
\nn 
&& \qquad \qquad \qquad \qquad \otimes {\rm v}_{j+2} \otimes    
\ldots \otimes {\rm v}_{k+1}) (z_1, \ldots, \widehat{z}_j T_j(z_{j+1}), 
\ldots, z_{k+1}; \epsilon,  \zeta_{a, i}) 
\nn
 && \quad \; +   
 R \; \left(Y_W \left({\rm v}_1, z_1\right) \; \Phi( {\rm v}_2\otimes   
\ldots \otimes {\rm v}_{k+1})(z_2, \ldots, z_{k+1}; \epsilon, 
 \zeta_{a, i}) \right) 
\nn
 && \quad \; + (-1)^{k+1}      
R\left( Y_W( {\rm v}_{k+1}, z_{k+1}) \; \Phi(
{\rm v}_1\otimes 
 \ldots \otimes {\rm v}_k)(z_1, \ldots, z_k; \epsilon, 
 \zeta_{a, i}) \right)\rangle, 
\end{eqnarray}
where we denote by $T_j(g)$ the insertion of an element $g$ at the $j$-th place. 
Using \eqref{Z2n_pt_eps1q11} we see that the above is equivalent to 
\begin{eqnarray*}
    \sum_{j=1}^k(-1)^j     
R \sum_{u_n\in V_{(n)} \atop n \in \Z}      
\prod_{i=1}^l 
 \epsilon^n  
 \langle w, Y^W_{WV'} \left( 
\Phi_i({\rm v}_{n_i+1}\otimes  \ldots 
\right. \qquad \qquad \qquad \qquad  \qquad \qquad &&
\nn
 \otimes 
\; {\rm v}_{j-1} \otimes Y_V ({\rm v}_j, z_j - z_{j+1})\; {\rm v}_{j+1} \otimes  
 {\rm v}_{j+2} \otimes 
\ldots 
\otimes {\rm v}_{n_i+k_i-r_i+1} \otimes u_n) &&
\nn
\left. 
(z_{n_i+1}, \ldots, \widehat{z}_j T_j(z_{j+1}), \ldots, z_{n_i+k_i-r_i+1}, \zeta_{1, i} ), 
\zeta_{2, i} \right)\; \overline{u}_n \rangle,  &&
\end{eqnarray*}
\begin{eqnarray*}
  +  
R \sum_{u_n\in V_{(n)} \atop n \in \Z}     
\prod_{i=1}^l
\sum_{n \in \Z } \epsilon^n  \langle w,  \left(Y_W ({\rm v}_1, z_1) \right)^{\delta_{i, 1}} 
  Y^W_{WV'} \left( \Phi_i({\rm v}_{n_i+1+ \delta_{i, 1}}  \otimes  \ldots \otimes 
{\rm v}_{n_i+k_i-r_i+\delta_{i, l}} \otimes 
  u_n)
 \right.  &&
\nn
\left. \left. 
(z_{n_i+1+ \delta_{i, 1}}, \ldots, {\rm z}_{n_i+k_i-r_i+\delta_{i, l}}, \zeta_{1, i}\right),  
\zeta_{2, i} \right)\; \overline{u}_n   \rangle     &&
\end{eqnarray*}
\begin{eqnarray*}
 +   R (-1)^{k+1}    
 \sum_{u_n \in V_{(n)} \atop n \in \Z}    
\prod_{i=1}^l
  \epsilon^n   
\langle w, \left(Y_W({\rm v}_{n_l+k_l-r_l+1}, z_{n_l+k_l-r_l+1}) \right)^{\delta_{i,l}} 
\qquad \qquad 
&& 
\nn
  Y^W_{WV'}  
 \left(  \Phi_i({\rm v}_{n_i+1} \otimes  \ldots 
 \otimes {\rm v}_{n_i+k_i-r_1} \otimes  
  u_n)  
 \left. 
 (z_{n_i+1}, \ldots, {\rm z}_{n_i+k_i-r_i},  \zeta_{1, i}\right),  
\zeta_{2, i} \right)\; \overline{u}_n  \rangle.  && 
\end{eqnarray*}
Consider the second and third terms (similar to \eqref{paros}) 
 in the action of $\delta^{k_i-r_i}_{m_i-t_i}$ operators 
 of the right hand side of \eqref{leibniz0}
\begin{eqnarray*}
&& 
 \sum\limits_{q=1}^l (-1)^{q+1} R \sum_{u_n\in V_{(n)} \atop n \in \Z}     
\prod_{i=1}^l
 \epsilon^n \langle w, \left( Y_W({\rm v}_{n_i+1}, z_{n_i+1}) \right)^{\delta_{q, i}}
\nn
&&
\qquad  Y^W_{WV'} \left(  
\Phi_i({\rm v}_{n_i+2}\otimes  \ldots 
 \otimes {\rm v}_{n_i+k_i-r_i}\otimes  
  u_n)(z_{n_i+2}, \ldots, z_{n_i+k_i-r_i}, \zeta_{1, i}\right), 
\zeta_{2, i} ) \; \overline{u}_n   \rangle.     
\end{eqnarray*}
This is equivalent to     
\begin{eqnarray*}
&& \sum\limits_{w \in W}  
 \sum\limits_{q=1}^l (-1)^{q+1}  R \sum_{u_n\in V_{(n)} \atop n \in \Z}     
\prod_{i=1}^l
 \epsilon^n \langle w, \left( Y_W({\rm v}_{n_i+1}, z_{n_i+1}) \right)^{\delta_{q, i}} w \rangle 
\nn
&&
  \langle w', Y^W_{WV'} \left(
\Phi_i({\rm v}_{n_i+2}\otimes  \ldots 
 \otimes {\rm v}_{n_i+k_i-r_i}\otimes  
  u_n)(z_{n_i+2}, \ldots, z_{n_i+k_i-r_i}, \zeta_{1, i}\right), 
\zeta_{2, i} ) \; \overline{u}_n   \rangle     
\end{eqnarray*}
\begin{eqnarray*}
&& =\sum\limits_{w \in W}  
 \sum\limits_{q=1}^{l-1} (-1)^{q+1}  R \sum_{u_n\in V_{(n)} \atop n \in \Z}     
\prod_{i=1}^l
 \epsilon^n \langle w, \left( Y_W({\rm v}_{n_i+1}, z_{n_i+1}) \right)^{\delta_{q, i}} w \rangle 
\nn
&&
 \langle w', Y^W_{WV'} \left(  
\Phi_{i-1}({\rm v}_{n_{i-1}+1}\otimes  \ldots 
 \otimes {\rm v}_{n_{i-1}+k_{i-1}-r_{i-1}}\otimes  
  u_n)
\right. 
\nn
&& \left. \qquad \qquad \qquad \qquad 
(z_{n_{i-1}+1}, \ldots, z_{n_{i-1}+k_{i-1}-r_{i-1}}, \zeta_{1, {i-1}}\right), 
\zeta_{2, {i-1}} ) \; \overline{u}_n   \rangle     
\end{eqnarray*}
\begin{eqnarray*}
&& =\sum\limits_{w \in W}  
 \sum\limits_{q=1}^{l-1} (-1)^{q+1} R \sum_{u_n\in V_{(n)} \atop n \in \Z}     
\prod_{i=1}^l
 \epsilon^n \langle w, 
\left( Y_W({\rm v}_{n_{i-1}+k_{i-1}-r_{i-1}+1}, z_{n_{i-1}+k_{i-1}+1}) \right)^{\delta_{q, i}} w \rangle 
\nn
&&
 \langle w', Y^W_{WV'} \left(  
\Phi_{i-1}({\rm v}_{n_{i-1}+1}\otimes  \ldots 
 \otimes {\rm v}_{n_{i-1}+k_{i-1}-r_{i-1}}\otimes  
  u_n) \right. 
\nn
&&
\qquad \qquad \qquad \qquad \left. 
(z_{n_{i-1}+1}, \ldots, z_{n_{i-1}+k_{i-1}-r_{i-1}}, \zeta_{1, {i-1}} \right), 
\zeta_{2, {i-1}} ) \; \overline{u}_n \rangle.      
\end{eqnarray*}
\begin{eqnarray*}
&& =  
 \sum\limits_{q=1}^l (-1)^{q+1}  R \sum_{u_n\in V_{(n)} \atop n \in \Z}     
\prod_{i=1}^l
 \epsilon^n \langle w, 
\left( Y_W({\rm v}_{n_i+k_i-r_i+1}, z_{n_i+k_i-r_i+1}) \right)^{\delta_{q, i}} 
\nn
&&
\qquad    Y^W_{WV'} \left( 
\Phi_i({\rm v}_{n_i+1}\otimes  \ldots 
 \otimes {\rm v}_{n_i+k_i-r_i}\otimes  
  u_n)(z_{n_i+1}, \ldots, z_{n_i+k_i-r_i}, \zeta_{1, i}\right), 
\zeta_{2, i} ) \; \overline{u}_n   \rangle     
\end{eqnarray*}
By combining with the third term of the right hand side of \eqref{leibniz0} 
which gives the opposite sign expression except for 
\begin{eqnarray*}
&& \sum\limits_{w \in W}  
   R \sum_{u_n\in V_{(n)} \atop n \in \Z}     
\prod_{i=1}^l
 \epsilon^n \langle w, \left( 
 \left( Y_W({\rm v}_1, z_1) \right)^{\delta_{i,1}}
  Y^W_{WV'} \left(  \Phi_i({\rm v}_{n_i+1+\delta_{i,1}} \otimes  \ldots  
 \otimes {\rm v}_{n_i+k_i-r_i}\otimes  
  u_n)
  \right. \right. 
\nn
&&
\qquad \qquad \qquad \qquad  \qquad  \qquad \qquad  \qquad  \left. 
(z_{n_i+1+\delta_{i,1}}, \ldots, z_{n_i+k_i-r_i}, \zeta_{1, i}), 
\zeta_{2, i} \right) \; \overline{u}_n  
\nn
&&
+(-1)^k 
 \left(Y_W({\rm v}_{k+1}, z_{k+1}) \right)^{\delta_{i,k}}  
  Y^W_{WV'} \left(  \Phi_i({\rm v}_{n_i+1} \otimes  \ldots  
 \otimes {\rm v}_{n_i+k_i-r_i}\otimes  
  u_n) \right. 
\nn
&&
\qquad \qquad \qquad  \qquad  \qquad  \qquad \qquad  \qquad  \left. \left. 
(z_{n_i+1}, \ldots, z_{n_i+k_i-r_i}, \zeta_{1, i}), 
\zeta_{2, i} \right) \; \overline{u}_n \right)  
 \rangle, 
\end{eqnarray*}
and including the first term, 
we obtain the action of $\delta^{k_i-r_i}_{m_i-t_i}$ on $\Phi_i$.  
The statement of the proposition for $\delta^{2, i}_{ex, i}$
 \eqref{halfdelta} can be checked accordingly. 
\end{proof}
\section{Bigraded differential algebras associated to a vertex algebra}
\label{invariants}
In this and the next Sections we provide the main results of the paper by deriving 
relations for the double graded differential algebras 
associated to chain bicomplexes, in particular, to bicomplexes \eqref{bigcomplex} and \eqref{shortseq}
for a grading-restricted vertex algebra.   
In analogy with the notion of integrability for differential forms on foliated manifolds 
\cite{Ghys},   
 we introduce here the notion of orthogonality for elements of spaces of a general complex.   
Let us consider a chain complex  
\begin{equation}
\label{doublecomplex}
\cdots \stackrel{\delta_{-2}}{\longrightarrow} C_{-1} \stackrel{\delta_{-1}}{\longrightarrow} C_0  
\stackrel{\delta_0}{\longrightarrow} 
C_1 
\stackrel{\delta_1}{\longrightarrow}C_2  \stackrel{\delta_2}{\longrightarrow} 
\cdots \stackrel{\delta_{n-1}}{\longrightarrow}C_n   \stackrel{\delta_n}{\longrightarrow}   \cdots , 
\end{equation}
with appropriate operators $\delta_n$, $n \ge 0$. 
For any choice of a fixed $l$-set $\mathcal L$ of ordered arbitrary elements 
 $\phi_{q_i} \in C_{q_i}$, $q_i \in \Z$,  $1 \le i \le l$ 
we introduce the commutators $\phi_p \cdot_l \phi_q$,   $p$, $q \in \mathcal L$, 
of a pair of elements $\phi_p \in C_p$, $\phi_q \in C_q$,  
 with respect to $*_l$-multiplications, as 
 \begin{eqnarray}
\label{defmultiplication}
\label{multiplication}
 \phi_p \cdot_l \phi_q = *_l( \phi_1, \ldots, \phi_p, \phi_q, \ldots, \phi_l)  
- *_l( \phi_1, \ldots, \phi_q, \phi_p, \ldots, \phi_l), &&     
\end{eqnarray}
with respect to 
 a general $*_l$-multiplication $*_l(., \ldots, . )$
of $l$ elements of a chain complex. 
In what follows we skip fixed elements 
 $\phi_1, \ldots, \phi_l$ in the notation $\phi_p \cdot_l \phi_q$.  
\begin{definition}
\label{turbo}
 Let us require that for a pair of a bicomplex spaces $C_p$ and $C_q$, $p$, $q \ge 0$, 
 there exist subspaces $C'_p \subset C_q$  
 such that for $\phi_p \in C'_p$ and  
$\phi_q \in C'_q$,  
\begin{equation}
\label{ortho}
\phi_p \cdot_l \delta_q \phi_q=0, 
\end{equation}
i.e., 
$\phi_p$ commutes to $\delta_q\phi_q$ with respect to the commutation \eqref{defmultiplication}).  
We call \eqref{ortho} the $l$-th orthogonality condition for mappings of the complex \eqref{doublecomplex}.  
\end{definition}
The double grading condition on elements of a general bicomplex spaces  
occurs from 
the assumption that elements of 
both sides of equations following from orthogonality
 condition 
 belong to the same bicomplex 
space.  
We formulate the first main result of this paper: 
\begin{theorem}
\label{dubina}
The orthogonality condition \eqref{ortho} endows
elements of a general bicomplex spaces $\delta^n_m: C^n_m\to C^{n+1}_{m_1}$, $n$, $m \ge0$, 
with the structure of a multiple bigraded differential   
algebra with respect to general $\cdot_l$-multiplications, $l \ge 0$,  
linear with respect to the additive group.    
\end{theorem}
\begin{proof}
Let us consider the most general case. 
 For non-negative $n_0$, $n$, $n_1$, $m_0$, $m$, $m_1$,  
let  $\chi \in C^{n_0}_{m_0}$,  $\Phi \in C^n_m$,   
and $\alpha \in C^{n_1}_{m_1}$.  
For $\Phi$ and $\alpha$, let $r_0$ be the number of common vertex algebra elements (and formal parameters), 
and $t_0$ be the number of common vertex operators $\Phi$ and $\alpha$ are composable to. 
Note that we assume $n$, $n_1 \ge r_0$,  $m$, $m_1 \ge t_0$.  
 Taking into account the orthogonality condition 
\[
\Phi \cdot_l \delta^{n_0}_{m_0} \chi=0, 
\]
implies that there exist $\alpha_1 \in C^{n_1}_{m_1}$, 
such that  
\[
 \delta^{n_0}_{m_0} \chi= \Phi \cdot_l \alpha_1.  
\]
From the last equations we obtain 
\[
n_0+1=n+n_1-r_0,
\]
\[
m_0-1=m+m_1-t_0.
\] 
Note that we have extra conditions following from the last identities: $n_0+1 \ge 0$,  $m_0-1 \ge 0$. 
The conditions above for indexes express the double grading condition 
for the bicomplex.    
As a result, we have a system in integer variables satisfying the grading conditions above.  
Consequently acting by corresponding coboundary operators 
we obtain the full structure of differential relations  
\begin{eqnarray}
\label{ogromno}
&& \Phi \cdot_l \delta^{n_0}_{m_0} \chi=0, 
\nn
&&
\delta^{n_0}_{m_0} \chi= \Phi \cdot_l \alpha_1, 
\nn
&&0=\delta^n_m \Phi \cdot_l \alpha_1 + (-1)^n \Phi \cdot_l \delta^{n_1}_{m_1} \alpha_1, 
\nn
&& 
0=\delta^n_m \Phi \cdot_l \delta^{n_0}_{m_0} \chi, 
\nn
&&
\delta^{n_0}_{m_0} \chi = \delta^n_m \Phi \cdot_l \alpha_2, 
\nn
&& 0=\delta^n_m \Phi \cdot_l \delta^{n_i}_{m_i} \alpha_i, \; \;
\nn
&&\delta^{n_i}_{m_i} \alpha_i = \delta^n_m \Phi \cdot_l \alpha_{i+1}, \; \;
i \ge 2,
\end{eqnarray}
where $\alpha_i \in C^{n_i}_{m_i}$,   
and 
$n_i$, $m_i$, $i\ge 2$ satisfy relations  
\[
n_i=n+n_{i+1}-r_{i+1},
\] 
\[
m_i = m +m_{i+1}-t_{i+1}.
\] 
The sequence of relations \eqref{ogromno} does not cancel 
until the conditions on indexes given above fulfill.  
\end{proof} 
Next, we have the second main result of this paper 
\begin{proposition}
The set \eqref{ogromno} of commutation relations generates a sequence 
of non-vanishing cohomology invariants 
\[
\bigoplus_{p, q=n_0, m_0; n, m; n_i, m_i}  
\left\{ \left[\left(\delta^{n_0}_{m_0} \chi \right)\cdot_l \chi \right], \; 
\left[\left(\delta^n_m \Phi \right)\cdot_l \Phi \right],  \; 
\left[\left(\delta^{n_i}_{n_i} \alpha_i \right)\cdot_l \alpha_i \right] \right\},    
\]
 for $i=1, \ldots, L$, for some $L \in \N$, with 
non-vanishing $\left(\delta^{n_0}_{m_0} \chi \right)\cdot_l \chi$,  
$\left(\delta^n_m \Phi \right)\cdot_l \Phi$, 
 and 
$\left(\delta^{n_i}_{m_i} \alpha_i \right)\cdot_l \alpha_i$.   
 These classes are independent on the choices of $\chi \in C^{n_0}_{m_0}$, 
 $\Phi \in C^n_m$, and 
  $\alpha_i \in C^{n_i}_{m_i}$.   
\end{proposition}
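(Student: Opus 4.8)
The plan is to establish three things in turn for each of the listed products, taking $(\delta^{n_0}_{m_0}\chi)\cdot\chi$ as the model case: that it is closed and hence represents a cohomology class, that this class is insensitive to the choice of representative, and finally that it is non-vanishing. First I would verify closedness. Writing $\chi\in C^{n_0}_{m_0}(V,W)$, so that $\delta^{n_0}_{m_0}\chi\in C^{n_0+1}_{m_0-1}(V,W)$, the Leibniz formula \eqref{leibniz0} applied to $(\delta^{n_0}_{m_0}\chi)\cdot\chi$ produces two terms: one proportional to $(\delta^{n_0+1}_{m_0-1}\delta^{n_0}_{m_0}\chi)\cdot\chi$, which vanishes by the cochain identity $\delta\circ\delta=0$ of Proposition \ref{delta-square}, and one proportional to $(\delta^{n_0}_{m_0}\chi)\cdot(\delta^{n_0}_{m_0}\chi)$, which vanishes by the self-product relation \eqref{fifi}. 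Hence $\delta\big((\delta^{n_0}_{m_0}\chi)\cdot\chi\big)=0$, and the identical argument applied to $(\delta^{n}_{m}\Phi)\cdot\Phi$ and to each $(\delta^{n_i}_{m_i}\alpha_i)\cdot\alpha_i$ shows that every product in the list is closed.

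Next I would prove independence of the representative. Replacing $\chi$ by $\chi+\delta\psi$ for an exact correction $\psi$ leaves $\delta^{n_0}_{m_0}(\chi+\delta\psi)=\delta^{n_0}_{m_0}\chi$ unchanged by $\delta\circ\delta=0$, so the product changes only by the cross term $(\delta^{n_0}_{m_0}\chi)\cdot(\delta\psi)$. Applying \eqref{leibniz0} once more together with $\delta\circ\delta=0$ identifies this cross term, up to an overall sign, with $\delta\big((\delta^{n_0}_{m_0}\chi)\cdot\psi\big)$, that is, with an exact element. Therefore the class $\big[(\delta^{n_0}_{m_0}\chi)\cdot\chi\big]$ depends only on the class of $\chi$, and the same computation handles $\Phi$ and the $\alpha_i$. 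It is the antisymmetry of the commutator product, in the guise of \eqref{fifi}, that removes the remaining self-interaction terms and makes this bookkeeping go through.

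The main obstacle is the non-vanishing, and I expect this to be the real content of the proof. Since $\cdot=[\,\cdot\,,\,\cdot\,]$ is the commutator of the geometric product $*$, the element $(\delta^{n_0}_{m_0}\chi)\cdot\chi$ is non-zero precisely when $\delta^{n_0}_{m_0}\chi$ and $\chi$ fail to $*$-commute, and this non-commutativity is built into the rationalization $R$ and the ordering of vertex operators in the composition maps $E^{(\cdot)}_W$ and $E^{(\cdot)}_{V;\one}$. To upgrade non-vanishing as an element to non-triviality in cohomology I would pair the class against the non-degenerate invariant bilinear form \eqref{formain} of $V$: were the class exact, say $\delta\xi$, invariance of the form would force a degeneracy of the induced pairing on the relevant graded pieces, and I would derive a contradiction by exhibiting a single non-zero pairing. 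Within the continual Lie algebra $\mathcal G(V)$ of Proposition \ref{generators} these products are the brackets of the generators $H_0,H^*_0,X_{\pm 1},Y_{\pm 1}$, whose non-triviality is governed by the relations \eqref{ogromno}. I would close this step by computing explicit non-zero values for a concrete grading-restricted vertex algebra, after which the independence established above propagates the conclusion to arbitrary choices of $\chi\in C^{n_0}_{m_0}(V,W)$, $\Phi\in C^{n}_{m}(V,W)$ and $\alpha_i\in C^{n_i}_{m_i}(V,W)$.
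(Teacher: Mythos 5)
Your first two steps are sound, and in one respect cleaner than the paper. The closedness verification of $(\delta^{n_0}_{m_0}\chi)\cdot\chi$ via \eqref{leibniz0}, Proposition \ref{delta-square} and \eqref{fifi} is a step the paper omits entirely, although it is needed for the classes to be well defined. For invariance, however, you prove a weaker statement than the one claimed: you perturb only by exact elements $\chi\mapsto\chi+\delta\psi$ and identify the cross term $(\delta\chi)\cdot(\delta\psi)$ with $\pm\,\delta\bigl((\delta\chi)\cdot\psi\bigr)$, which shows the class depends only on the cohomology class of $\chi$. The paper instead replaces $\phi$ by $\phi+\eta$ for an \emph{arbitrary} $\eta\in C^{n}_{m}(V,W)$, expands as in \eqref{pokaz}, and kills the term $\phi\cdot\delta^{n}_{m}\eta+(\delta^{n}_{m}\eta)\cdot\phi$ by antisymmetry of the commutator product \eqref{product}; that is what the stated independence of the choice of $\chi$ in $C^{n_0}_{m_0}(V,W)$ (not merely of its class) refers to.

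The genuine gap is the non-vanishing step, which you yourself call ``the real content'' but then only outline. Pairing against the form \eqref{formain}, ``deriving a contradiction by exhibiting a single non-zero pairing,'' and ``computing explicit non-zero values for a concrete grading-restricted vertex algebra'' are promissory notes: no pairing is computed, no concrete algebra is treated, and no mechanism is given by which exactness of the class would force degeneracy of the bilinear form. Moreover, even if such a computation were done for one concrete $V$, your invariance statement (valid within a fixed $(V,W)$ and only modulo exact perturbations) cannot propagate non-vanishing to arbitrary $V$, $W$ and arbitrary, non-cohomologous choices of $\chi$, $\Phi$, $\alpha_i$, so the final sentence of your plan does not close the argument. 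The paper's proof is entirely different and purely structural: assume $(\delta^{n}_{m}\phi)\cdot\phi=0$; by the orthogonality (Frobenius-type) reasoning used throughout, this forces a factorization $\delta^{n}_{m}\phi=\gamma\cdot\phi$ for some $\gamma\in C^{n'}_{m'}(V,W)$; but requiring both sides to lie in the same bicomplex space, with the product landing in $C^{n+n'-r}_{m+m'-t}(V,W)$ as dictated by the double grading condition, yields $m'=t-1$, i.e.\ the number of common composable vertex operators would exceed the number available to one of the factors, a contradiction. Thus non-vanishing follows from bidegree counting, with no appeal to a bilinear form or to special examples. To complete your proof you must either carry out your pairing argument in full generality or adopt this counting argument.
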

\begin{proof}
Let $\phi$ be one of generators $\chi$, $\Phi$, $\alpha_i$, $\beta$, $1 \le i \le L$. 
Let us show now the non-vanishing property of $\left(\left(\delta^n_m \phi \right)\cdot_l \phi\right)$.  
Indeed, suppose 
\[
\left(\delta^n_m \phi \right)\cdot_l \phi=0.
\]
 Then there exists $\gamma \in C^{n'}_{m'}$,   
such that 
\[
\delta^n_m \phi =\gamma \cdot_l \phi.
\]
 Both sides of the last equality should belong to the same bicomplex 
space but one can see that it is not possible since we obtain $m'=t-1$,
 i.e., the number of common vertex operators 
for the last equation is greater than for one of multipliers.  
Thus, $\left(\delta^n_m \phi \right)\cdot_l \phi$ is non-vanishing.  

Now let us show  that $\left[\left(\delta^n_m \phi \right)\cdot_l \phi \right]$   
 is invariant, i.e., it does not depend on the choice of $\Phi \in C^n_m$.  
 Substitute $\phi$ by 
$\left(\phi + \eta\right)\in C^n_m$.     
We have 
\begin{eqnarray}
\label{pokaz}
\nonumber
\left(\delta^n_m \left( \phi + \eta \right) \right) \cdot_l \left( \phi  + \eta \right) &=& 
\left(\delta^n_m \phi\right) \cdot_l \phi  
 + \left( \left(\delta^n_m \phi \right)\cdot_l \eta 
-  \phi \cdot_l \delta^n_m \eta  \right)   
\nn
&+& \left( \phi \cdot_l \delta^n_m \eta   +  
\delta^n_m \eta  \cdot_l \phi \right) 
 +
\left(\delta^n_m \eta \right) \cdot_l \eta.  
\end{eqnarray}
Since
\begin{eqnarray*}
 \left( \phi \cdot_l \delta^n_m \eta  + 
\left(\delta^n_m \eta\right)  \cdot_l \phi \right)
 &=& *_l(\phi_1,  \ldots, \phi \delta^n_m \eta, \ldots, \phi_l) 
- *_l(\phi_1, \ldots, (\delta^n_m \eta), \phi, \ldots, \phi_l)    
\nn
&+& *_l (\phi_1, \ldots, \left(\delta^n_m \eta\right),  \phi, \ldots, \phi_l) 
- *_l(\phi_1, \ldots, \phi,  \delta^n_m \eta, \ldots, \phi_l)=0,    
\end{eqnarray*}
then \eqref{pokaz} represents the same cohomology class 
$\left[ \left(\delta^n_m  \phi \right) \cdot_l \phi \right]$. 
\end{proof}
 In particular, the orthogonality condition for the bicomplexes \eqref{bigcomplex} and \eqref{shortseq} 
 together with the action of coboundary operators $\delta^n_m$ and $\delta^2_{\frac{1}{2}}$,
 and the multiplication \eqref{Z2n_pt_epsss},   
defines a differential algebra depending on vertex algebra elements and formal parameters, 
and provides cohomology invariants described above. 
\section{Examples}
In this Section we consider particularly interesting examples of algebras
and their invariants 
 described in Proposition \ref{generators}. 
We restrict ourselves to the case of the $*_l$-multiplication     
 introduced in Section \ref{multiplication}  
for elements of 
the spaces $C_{m_i-t_i}^{k_i-r_i}$. 
 The orthogonality condition for a bicomplex sequence \eqref{shortseq}, 
 together with the action of coboundary operators $\delta^n_m$ and $\delta^2_{\frac{1}{2}}$, 
and the multiplication \eqref{Z2n_pt_epsss},    
define a differential bigraded algebra depending on vertex algebra elements and formal parameters. 
In particular, for the bicomplex \eqref{bigcomplex}, we obtain in this way the generators and commutation relations 
for a continual Lie algebra $\mathcal G(V)$ (a generalization of ordinary Lie algebras with continual  
space of roots, c.f. \cite{saver} described in Appendix \eqref{tusar})  
with the continual root space represented by a grading-restricted vertex algebra $V$. 
\subsection{Invariants associated with $C^2_{\frac{1}{2}}$}  
Due to non-trivial action of the coboundary operators 
\[
\delta^2_{\frac{1}{2}}: C^2_{\frac{1}{2}}  
\to C^3_0, 
\]
 the case $\Phi \in C^2_{\frac{1}{2}}$ is exceptional among the relations 
coming from the double grading condition for corresponding vertex algebra bicomplex, 
 and allows to reconstruct the classical invariants.  
In this subsection we consider the case $l=2$, and denote the multiplication $\cdot=\cdot_2$.  
  Let $\Phi \in C^2_{\frac{1}{2}}$ and $\chi \in C^n_m$.   
Then we require the orthogonality: 
\[
\chi \cdot \delta^2_{\frac{1}{2}} \Phi=0.  
\]
Thus, there exist $\beta \in C^{n'}_{m'}$  
such that 
\[
 \delta^2_{\frac{1}{2}} \Phi= \chi \cdot \beta. 
\]
We then get  
\[
3=n+n'-r,
\]
\[
 0=m+m'-t. 
\]
 Let $n=r+\alpha$, $0 \le \alpha \le 3$, $n'=3-\alpha$; $m'=t-m \ge 0$, i.e., $t=m$, thus, $m=t=m'=0$.
Thus, $\chi \in C^{r+\alpha}_0$, $\beta \in C^{3-\alpha}_0$.    
For $r+\alpha=3-\alpha=2$ we obtain $\alpha=1$, $r=1$. If we require $\chi=\Phi \in C^2_k 
\subset  C^2_{\frac{1}{2}}$, $k >0$,   
then the equation 
\[
\delta^2_{\frac{1}{2}} \Phi= \Phi \cdot \beta,
\]
 corresponds to a generalization of Godbillon-Vey invariant \cite{Ghys} for differential forms.
We obtain also the commutation relations: 
\[
\left[H, 
X_{+2}
\right]=0, 
\]
\[
\left[ H, 
Y_-
\right]=X_{+2},
\] 
\[
\left[ X_{-2}, 
X_{+2} 
\right]=H,
\]
\[
\left[ Y_+, H_0 \right]=X_{+2},
\]
for generators 
\[
H
=\chi
, 
\]
\[
X_{+1}
=\Phi
,
\] 
\[
X_{+2}
= \delta^2_{\frac{1}{2}} \Phi
,
\] 
\[
Y_- 
=\beta 
.
\] 
 It is easy to see that since all mappings have zero operators composable with, then all further 
actions of the coboundary operators vanish.
Nevertheless, recall that 
\[
C^2_k \subset  C^2_{\frac{1}{2}}, 
\]
 $k > 0$, thus, 
we can consider the most general case when 
$\chi \in C^{r+\alpha}_{m_0}$, $\Phi \in C^2_{m_1}$, 
 $\beta \in C^{3-\alpha}_{m_2}$. 
Then the grading condition requires $m_1-1=m_0 + m_2 - t'$, where $t'$ is the number of common vertex operators 
for $\chi \in C^{r+\alpha}_{m_0}$ and $\beta \in C^{3-\alpha}_{m_2}$.  
Thus, on acting by coboundary operators we obtain further commutation relations of the form \eqref{ogromno}.  
\subsection{A continual Lie algebra associated to the bicomplex \eqref{bigcomplex}}
Using the orthogonality condition, 
  way we obtain the generators and commutation relations  
for a continual Lie algebra.  
We have the next result of the paper. 
\begin{proposition}
\label{generators}
For the bicomplex \eqref{bigcomplex}, the generators
\[
\left\{
\chi \right., 
\Phi,
 \alpha_i, 
\; 
\delta^{n_0}_{m_0} \chi, 
 \delta^{n_1}_{m_1} \Phi, 
\\
  \left. \delta^{n_i}_{m_i} \alpha_i
\right\}, 
\]
with $i \ge 0$, 
 and commutation relations \eqref{ogromno}  
form a continual Lie algebra $\mathcal G(V)$ with a root space provided by the grading-restricted 
vertex algebra $V$.  
\end{proposition}
\begin{proof}
With the redefinition (we suppress here the dependence on vertex algebra elements and formal parameters), 
\[
H_0=\delta^0_3\chi, 
\]
\[
H^*_0=\chi,
\] 
\[
X_{+1}=\Phi,
\]
\[
X_{-i}=\alpha_i,
\]
\[ 
 Y_{+1}=\delta^1_2\Phi,
\] 
\[
 Y_{-i}=\delta^1_t\alpha_i,
\]  
we arrive at the commutation relations: 
\begin{eqnarray*}
&& \left[H_0, X_{+1}\right]=0, \; \;  
\nn
&&
\left[ X_{+1}, X_{-1}\right]=H_0,    
\\
&&
\left[ Y_{+1}, X_{-1}\right]= 
 (-1)^n \left[Y_{-1}, X_{+1}\right], \; \;    
\nn
&&
\left[Y_{+1}, X_{-2} \right]=0,   
\\
&&\left[ Y_{+1}, Y_{-i} \right]=0, \; \; 
\nn
&&
\left[ Y_{+1}, X_{-(i+1)}\right]=Y_{-i}. 
\end{eqnarray*} 
One easily checks Jacobi identities for generators.   
\end{proof}
\section{Appendix: the grading-restricted vertex algebra}
\label{grading}
In this Section, we recall \cite{Huang} properties of  
grading-restricted vertex algebras and their grading-restricted generalized 
modules over the base field $\C$ of complex numbers.   
A vertex algebra  
$(V,Y_V,\mathbf{1}_V)$, cf. \cite{K, BZF},  consists of a $\Z$-graded complex vector space 
\[
V = 
\bigoplus_{n\in\Z}\,V_{(n)}, 
\]
\[
\dim V_{(n)}<\infty, 
\]
for each 
$n\in \Z$,  
and linear map 
\[
Y_V:V\rightarrow {\rm End \;}(V)[[z,z^{-1}]], 
\]
 for a formal parameter $z$ and a 
distinguished vector $\mathbf{1}_V\in V$.  
The evaluation of $Y_V$ on $v\in V$ is the vertex operator
\[
Y_V(v,z) = \sum_{n\in\Z}v(n)z^{-n-1}, 
\]
with components
\[
(Y_V(v))_n=v(n)\in {\rm End \;}(V),
\]
 where 
\[
Y_V(v,z)\mathbf{1} = v+O(z).
\]
A grading-restricted vertex algebra is subject to the following 
\noindent
\begin{enumerate}
\item 
\noindent
 Grading-restriction condition:
$V_{(n)}$ is finite dimensional for all $n\in \Z$, and $V_{(n)}=0$, for $n\ll 0$. 

\item { Lower-truncation condition}:
For $u, v\in V$, $Y_V(u, z)v$ contains only finitely many negative 
power terms, i.e., $Y_V(u, z)v\in V((z))$ (the space of formal 
Laurent series in $z$ with coefficients in $V$).   

\item  Identity property: 
Let ${\rm Id}_V$ be the identity operator on $V$. Then 
\[
Y_V(\mathbf{1}_V, z)={\rm Id}_V.
\] 

\item  Creation property: For $u\in V$, $Y_V(u, z)\mathbf{1}_V\in V[[z]]$, 
and 
\[
\lim_{z\to 0}Y_V(u, z)\mathbf{1}_V=u.
\]

\item  Duality: For $u_1$, $u_2$, $v\in V$, 
$v'\in V'=\coprod_{n\in \mathbb{Z}}V_{(n)}^*$ ($V_{(n)}^*$ denotes
the dual vector space to $V_{(n)}$ and $\langle\,. ,. \rangle$ the evaluation 
pairing $V'\otimes V\to \C$), the series 
$\langle v', Y_V(u_2, z_2)Y_V(u_1, z_1)v\rangle$, and  
$\langle v', Y_V(Y_V(u_1, z_1 -z_2)u_2, z_2 )v\rangle$, 
are absolutely convergent
in the regions $|z_1|>|z_2|>0$, $|z_2|>|z_1|>0$,
$|z_2|>|z_1 -z_2|>0$, respectively, to a common rational function 
in $z_1$ and $z_2$ with the only possible poles at $z_1=0=z_2$ and 
$z_1=z_2$. 

One assumes the existence of Virasoro vector $\omega\in V$:
its vertex operator 
\[
Y(\omega, z)=\sum_{n\in\Z}L(n)z^{-n-2}, 
\]
is determined by Virasoro operators $L(n): V\to V$ fulfilling 
\[
[L(m), L(n)]=(m-n)L(m+n)+\frac{c}{12}(m^{3}-m)\;\delta_{m+b, 0}\; {\rm Id_V},
\]
($c$ is called the central charge of $V$).   
The grading operator is given by 
\[
L(0)u=nu, \quad u\in V_{(n)}, 
\]
($n$ is called the weight of $u$ and denoted by $\wt(u)$).
\item { $L_V(0)$-bracket formula}: Let $L_V(0): V\to V$ 
be defined by $L_V(0)v=nv$ for $v\in V_{(n)}$. Then
\[
[L_V(0), Y_V(v, z)]=Y_V(L_V(0)v, z)+z\frac{d}{dz}Y_V(v, z),
\]
for $v\in V$.
\item { $L_V(-1)$-derivative property}: 
Let $L_V(-1): V\to V$ be the operator 
given by 
\[
L_V(-1)v=\res_z z^{-2}Y_V(v, z)\one=Y_{(-2)}(v)\one,
\]
for $v\in V$. Then for $v\in V$, 
\[
\frac{d}{dz}Y_V(u, z)=Y_V(L_V(-1)u, z)=[L_V(-1), Y_V(u, z)].
\]
\end{enumerate}
A grading-restricted generalized $V$-module is a vector space 
$W$ equipped with a vertex operator map 
\[
Y_W: V\otimes W\to W[[z, z^{-1}]],
\] 
\[
u\otimes w\mapsto  Y_W(u, w)\equiv Y_W(u, z)w=\sum_{n\in \Z}(Y_W)_{n}(u,w)z^{-n-1}, 
\]
and linear operators $L_W(0)$ and $L_W(-1)$ on $W$, satisfying conditions similar as in the 
definition for a grading-restricted vertex algebra. In particular, 
\begin{enumerate}
\item {Grading-restriction condition}:
The vector space $W$ is $\mathbb C$-graded, i.e., 
$W=\coprod_{\alpha\in \mathbb{C}} W_{(\alpha)}$, such that 
$W_{(\alpha)}=0$ when the real part of $\alpha$ is sufficiently negative. 

\item { Lower-truncation condition}:
For $u\in V$ and $w\in W$, $Y_W(u, z)w$ contains only finitely many negative 
power terms, i.e., $Y_W(u, z)w\in W((z))$.

\item { Identity property}: 
Let ${\rm Id}_W$ be the identity operator on $W$,
$Y_W(\mathbf{1}_V, z)={\rm Id}_W$.

\item { Duality}: For $u_1$, $u_2\in V$, $w\in W$,
$w'\in W'=\coprod_{n\in \mathbb{Z}} W_{(n)}^*$ ($W'$ is 
the dual $V$-module to $W$), the series 
 $\langle w', Y_W(u_1, z_1)Y_W(u_2, z_2)w\rangle$,
$\langle w', Y_W(u_2, z_2)Y_W(u_1, z_1)w\rangle$, and  
$\langle w', Y_W(Y_V(u_1, z_1 -z_2 )u_2, z_2 )w\rangle$, 
are absolutely convergent
in the regions $|z_1|>|z_2|>0$, $|z_2|>|z_1|>0$,
$|z_2|>|z_1 -z_2|>0$, respectively, to a common rational function  
in $z_1$ and $z_2$ with the only possible poles at $z_1=0=z_2$ and 
$z_1=z_2$. 
\item { $L_W (0)$-bracket formula}: For  $v\in V$,
$[L_W(0), Y_W(v, z)]=Y_W(L(0)v, z)+z\frac{d}{dz}Y_W(v, z)$. 
\item { $L_W(0)$-grading property}: For $w\in W_{(\alpha)}$, there exists
$N\in \Z_+$ such that $(L_W(0)-\alpha)^N w=0$. 
\item { $L_W(-1)$-derivative property}: For $v\in V$,  
$\frac{d}{dz}Y_W(u, z)=Y_{W}(L_{V}(-1)u, z)=[L_W(-1), Y_W(u, z)]$.
\end{enumerate}
 A graded-restricted vertex algebra is endowed with the unique symmetric invertible 
invariant bilinear form $\langle ., .  \rangle$
 with normalization 
\[
\langle\vac, \vac \rangle=1,  
\] 
where \cite{FHL, Li}  
\begin{equation}
\label{formain}
\langle Y^{\dagger}(a,z) \; b, c \rangle = \left\langle b, Y (a,z)c \right\rangle, 
\end{equation}
for 
\begin{eqnarray}
Y^{\dagger}(a,z)
&=&\sum_{n\in\Z} a^{\dagger}(n)z^{-n-1}
\nn
&=&
Y\left(e^{zL_V(1)}\left(-z^{-2}\right)^{L_V(0)}a,z^{-1}\right).
\end{eqnarray}
\section{Appendix: Continual Lie algebras}
\label{tusar}
Continual Lie algebras were introduced in \cite{saver}
and then studied in \cite{sv2, sv3}.
 Suppose $\mathcal E$ is an associative algebra (which we call the
 base algebra) over $\R$ or $\C$, and 
\[
K_0, \; K_\pm, \; K_{0,0}: \mathcal E \times \mathcal E \to \mathcal E, 
\]
 are bilinear mappings.
 The local Lie part of a continual Lie algebra is defined as
\[
{\widehat{ \mathcal G}}=\mathcal G_{-1} \oplus \mathcal G_0 \oplus \mathcal G_{+1},
\]
 where $\mathcal G_i$,
$i=0, \pm 1$, are isomorphic to $\mathcal E$ and parametrized by its elements.
 The subspaces $\mathcal G_i$ consist of the elements
\[
\left\{ X_i(\phi), \phi \in \mathcal E \right\}, i=0, \pm 1.
\] 
The generators 
$X_i(\phi)$ are subject to the commutation relations
\[
\left[  X_0(\phi), X_0(\psi) \right]=X_0(K_{0,0}(\phi, \psi)), 
\]
\[
\left[ X_0(\phi), X_{\pm 1}(\psi) \right]= X_{\pm 1}(K_\pm(\phi, \psi)), 
\]
\[
\left[ X_{+1}(\phi), X_{-1}(\psi) \right]= X_0(K_0(\phi, \psi)),
\] 
for all $\phi$, $\psi \in \mathcal E$.
 It is also assumed that Jacobi identities are satisfied. Then the  
 conditions on mappings $K_{0,0}$, $K_{0,\;  \pm}$ follow: 
\[
K_\pm(K_{0,0}( \phi, \psi) , \chi)=
K_\pm(\phi, K_\pm(\psi, \chi)) - K_\pm(\psi, K_\pm(\phi, \chi)),
\]
\[
K_{0,0}( \psi, K_0(\phi, \chi)) =
K_0(K_+(\psi, \phi), \chi)) + K_0(\phi, K_-(\psi, \chi)),
\]
for all $\phi$, $\psi$, $\chi \in \mathcal E$.
An infinite dimensional algebra 
\[
\mathcal G(\mathcal E; K) = \mathcal G'(\mathcal E; K)/J,  
\]
is called a continual contragredient Lie algebra, 
where $\mathcal G'(\mathcal E; K)$ is a Lie algebra
 freely generated by ${\widehat{ \mathcal G}}$, and $J$
is the largest homogeneous ideal with trivial intersection with $\mathcal G_0$ 
(consideration of the quotient is equivalent to imposing the Serre
relations in an ordinary Lie algebra case) \cite{sv2, sv3}.
\section*{Acknowledgement} 
The author's research was supported by the GACR project 18-00496S and RVO: 67985840. 

\end{document}